\newtheorem{theorem}{Theorem}[section]
\newtheorem{lemma}[theorem]{Lemma}
\newtheorem{corollary}[theorem]{Corollary}
\newtheorem{proposition}[theorem]{Proposition}
\newtheorem{defi}[theorem]{Definition}
\title{$*$-polynomial identities of $4 \times 4$ upper triangular matrices with the reflection involution}
\author{Ronald Ismael Quispe Urure 
\thanks{Supported by Ph.D. grant from CAPES}\\
Department of Mathematics, Federal University of S\~ao Carlos\\
13565-905 S\~ao Carlos, SP, Brazil\\
e-mail: \texttt{urure6@dm.ufscar.br}}
\begin{document}

\maketitle

\noindent\textbf{Keywords:}  Upper triangular matrix algebra, Involution, Identities with involution, PI-algebra.

\noindent\textbf{2010 AMS MSC Classification:} 16R10, 16R50, 16W10.

\

\begin{abstract}
	Let $UT_4(F)$ be $4\times 4$ upper triangular matrix algebra over a field $F$ 
	of characteristic zero and let $\mathcal{A}$ be the subalgebra of  $UT_4(F)$ linearly generated by $\{\mathbf{e}_{ij}:1 \leq i\leq j \leq 4 \} \setminus \mathbf{e}_{23}$ 
	where $\{\mathbf{e}_{ij} : 1 \leq i\leq j \leq 4\}$ is the standard basis of $UT_4(F)$. 
	We describe the set of all $*$-polynomial identities for $\mathcal{A}$ with the involution defined by the reflection of second diagonal.
\end{abstract}

\section{Introduction}
Let $F$ a field. 
 In this paper, we will consider unitary associative algebras over $F$ only. We refer to them simply by algebras. 

Let $\mathcal{R}$ be an algebra over a field $F$ of char(F)$\neq 2$ (characteristic different from $2$). 
A map
 $*:\mathcal{R} \rightarrow \mathcal{R}$ is an involution if it is an automorphism of the additive group $\mathcal{R}$ such that 
 \[(ab)^* = b^* a^* \ \ \mbox{and} \ \ (a^*)^* = a\]
  for all $a,b \in \mathcal{R}$. 
Let $Z(\mathcal{R})$ be the center of $\mathcal{R}$. If $a^*=a$ for all $a\in Z(\mathcal{R})$, then $*$ is called an involution of the first kind on $\mathcal{R}$. Otherwise $*$ is called an involution of the second kind.
From now on we consider involutions of the first kind only.
 
The description of the involutions on a given algebra is an important task in ring theory. 
In the algebra $UT_k(F)$ of the $k\times k$ upper triangular matrix over F we have an important involution.
For every matrix $A\in UT_k(F)$ define $A^* = J A^t J$ where $A^t$ denotes the usual matrix transpose and $J$ is the following permutation matrix:
\[ \begin{pmatrix}
0 & \cdots & 0 & 1 \\
0 & \cdots & 1 & 0 \\
\vdots & & \vdots & \vdots \\
1 & \cdots & 0 & 0
\end{pmatrix}. \] 
If $k$ is odd integer, any other involution in $UT_k(F)$ is completely determined by $*$ (see \cite{vinkossca}). $*$ is called of the \emph{reflection} or \emph{transpose involution} on $UT_k(F)$.
In the case $k=2l$ there exist two classes of inequivalent involutions. One of them is the same $*$ and the other is defined by $A^s = D A^* D$, for all $A \in UT_k(F)$. Here, $D$ is the matrix  
\[  \begin{pmatrix}
I_l & 0 \\
0 & -I_l 
\end{pmatrix} \] and $I_l$ is the identity matrix of the full matrix algebra $M_l(F)$. 
$s$ is called of the \emph{symplectic involution} on $UT_{2l}(F)$. 
Details about this results can be found in  \cite{vinkossca} and 
various others properties of involutions and involution-like maps for the upper
triangular matrix can be found in \cite{marcoux}. 

Let 
 $Y=\{y_1,y_2,\ldots \}$ and $ Z=\{z_1,z_2,\ldots \}$ be two disjoint infinite sets. 
 Denote by $F\langle Y \cup Z \rangle$ the free unitary associative algebra over $F$, freely generated by $Y\cup Z$. The elements of $F\langle Y \cup Z \rangle$ are called \emph{polynomials}.
Let $*$ be an involution for $\mathcal{R}$  and lets $\mathcal{R}^{+}$ and $\mathcal{R}^{-}$ denote the sets 
\[  \{a\in \mathcal{R}: \ a^*=a\} \qquad  \mbox{and} \qquad \{a\in \mathcal{R}: \ a^*=-a\} \]
respectively. 
  $f(y_1,\ldots,y_n,z_1,\ldots,z_m) \in F\langle Y \cup Z \rangle$ is a $*$-polynomial identity for $\mathcal{R}$ if 
\[f(a_1,\ldots,a_n,b_1,\ldots,b_m)=0\]
for all $a_1,\ldots,a_n \in \mathcal{R}^{+}$ and for all $b_1,\ldots,b_m\in \mathcal{R}^{-}$. 
Denote by $Id(\mathcal{R},*)$ the set of all  $*$-polynomial identities for  $(\mathcal{R},*)$.
It is natural to ask what the $*$-polynomial identities satisfied by the reflection  and by the symplectic involution of the algebra $UT_k(F)$. 
There is a description of $Id(UT_2(F),*)$ and $Id(UT_2(F),s)$ when $F$ is an arbitrary field of char$(F) \neq 2$, see \cite{vinkossca} for case $F$ infinite and \cite{urure} for case $F$ finite. Have also been described $Id(UT_3(F),*)$ when $F$ is a field of char$(F)=0$. It's an open problem describe $Id(UT_k(F),*)$ in other cases.

Consider the subalgebra $\mathcal{A}$  of $UT_4(F)$ constituting by matrices 
\[ 
\begin{pmatrix}
A & C \\
0 & B
\end{pmatrix} \] where $A,B,C \in UT_2(F)$.
In order to solve the $k=4$ case, in this paper we describe the set of all $*$-polynomial identities for $\mathcal{A}$ with this involution.

\section{Preliminaries}
$UT_4(F)^+$ and $UT_4(F)^-$ are subspaces of $UT_4(F)$. If char$F \neq 2$, can be write as direct sum   $$UT_4(F) = UT_4(F)^+ \oplus UT_4(F)^-.$$
The center of the $UT_4(F)$ is given by the scalar matrices 
$ \{ \lambda I_4 : \ \lambda \in F \}$.

   Given $\mathcal{S}$  an algebra, the commutators in $S$ 
are defined inductively by
\begin{equation*}
	[a_1,a_2] = a_1 a_2-a_2 a_1 \qquad \mbox{and} \qquad [a_1,\ldots,a_n] = [[a_1,\ldots,a_{n-1}],a_n]. 
\end{equation*} for all $a_1,\ldots,a_n \in \mathcal{S}$. $\mathcal{S}$ satisfy the Jacobi's Identity, it is
\[  [a_3,a_2,a_1] +  [a_2,a_1,a_3] + [a_1,a_3,a_2] = 0 \] 
for all $a_1,a_2,a_3 \in \mathcal{S}$.

Let 
$X=\{x_1,x_2,\ldots \}$  be an infinite set. 
Denote by $F\langle X \rangle$ the free unitary associative algebra over $F$, freely generated by $X$. 
$f(x_1,\ldots,x_n) \in F\langle X \rangle$ is a polynomial identity for $\mathcal{S}$ if 
\[f(a_1,\ldots,a_n)=0\]
for all $a_1,\ldots,a_n \in \mathcal{S}$.
Denote by $Id(\mathcal{S})$ the set of all  polynomial identities for  $\mathcal{S}$.
A $T$-ideal of $F\langle X \rangle$ is an ideal closed under all endomorphism of $F\langle X \rangle$. Recall that this is equivalent to stating that an ideal $I$ is a $T$-ideal of $F\langle X \rangle$ if
$f(x_1,\ldots,x_n) \in I$ then
\[f(g_1,\ldots,g_n) \in I\]
for all $g_1,\ldots,g_n \in F\langle X \rangle$.

 $F\langle Y \cup Z \rangle$ has an involution, which we denote by $*$ as well, satisfying  
 \[ (y_i)^* = y_i \ \ \mbox{and} \ \ (z_i)^* = -z_i. \]
 for all $i \geq 1$. 
 The elements of $Y$ are called of symmetric variables and the elements of $Z$ are called of skew-symmetric variables.
An endomorphism $\varphi$ of $F\langle Y \cup Z \rangle$ preserves involution if 
\[ \varphi (f^*) = (\varphi(f))^* \]
for all $f\in F\langle Y \cup Z \rangle$. 
An ideal $I$ is a $*$-ideal if $a\in I$ implies $a^* \in I$.
A $T(*)$-ideal of $F\langle Y \cup Z \rangle$ is an ideal closed under all endomorphism of $F\langle Y \cup Z \rangle$ that preserves involution. Recall that this is equivalent to stating that a $*$-ideal $I$ is a $T(*)$-ideal if
 $f(y_1,\ldots,y_n,z_1,\ldots,z_m) \in I$ then
\[f(g_1,\ldots,g_n,h_1,\ldots,h_m) \in I\]
for all $g_1,\ldots,g_n \in F\langle Y \cup Z \rangle^{+}$ and all $h_1,\ldots,h_m \in F\langle Y \cup Z \rangle^{-}$. A subset $S\subseteq I$ generates $I$ as a $T(*)$-ideal if $I$ is a minimal $T(*)$-ideal containing $S$. 

Let  $\mathcal{R}$ an algebra with involution $*$
	$Id(\mathcal{R},*)$ is a $T(*)$-ideal of $F\langle Y \cup Z \rangle$. Conversely, each $T(*)$-ideal of $F\langle Y\cup Z \rangle$ is the set of $*$-polynomial identities for some algebra with involution $(\mathcal{R},*)$.

A polynomial $f(y_1,\ldots,y_n,z_1,\ldots,z_m) \in F\langle Y\cup Z \rangle$ is 
called $Y$-proper if $f$ is a linear combination of polynomials
\[z_1^{r_1}\cdots z_m^{r_m}f_1 \cdots f_t \]
where $r_i \geq 0$, $f_i \in F\langle Y \cup Z \rangle$ is a  
commutator, $t\geq 0$ ($f_0=1$ if $t = 0$). 
Denote by $B$ the vector space of all $Y$-proper polynomials.
Every element 
$g(y_1,\ldots,y_n,z_1,\ldots,z_m) \in F\langle Y \cup Z \rangle$ is a linear combination of polynomials 
\begin{equation}
y_1^{s_1}\cdots y_n^{s_n}g_{(s_1,\ldots,s_n)} 
\end{equation}
where $s_i \geq 0$ and $g_{(s_1,\ldots,s_n)}\in B$. When $F$ is infinite it follows that every $T(*)$-ideal is generated by its $Y$-proper polynomials and when $F$ is a field of char$F=0$ every $T(*)$-ideal is generated by its $Y$-proper multilinear ones. See \cite[Lemma 2.1]{drenskygiambruno}

 Lemma \ref{lema base de Udos} can be found in \cite[Theorem 5.2.1]{drenskybook}. 

\begin{lemma}\label{lema base de Udos}
	Let $F$ be an infinity field. Then, a linear basis for $F \left\langle X \right\rangle / Id(UT_2(F))$ is given by the elements 
	\begin{align*} 
	x_1^{r_1} \ldots x_m^{r_m} [x_{j_1},\ldots,x_{j_n}]^t + Id(UT_2(F))
	\end{align*} where, $r_i \geq 0$, $t\in \{0,1\}$, $j_1 > j_2 \leq \ldots \leq j_n.$
\end{lemma}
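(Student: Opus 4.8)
The plan is to establish the two halves of the claim separately: that the displayed cosets span $F\langle X\rangle/Id(UT_2(F))$, and that they are linearly independent modulo $Id(UT_2(F))$.

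\emph{Spanning.} I would start from the normal-form decomposition recalled above (in its version for $F\langle X\rangle$, without skew variables): every element of $F\langle X\rangle$ is, modulo any $T$-ideal, a linear combination of elements $x_1^{r_1}\cdots x_m^{r_m}w$ with $w$ a product of commutators. The special feature of $UT_2(F)$ is that $[UT_2(F),UT_2(F)]\subseteq F\mathbf{e}_{12}$ while $\mathbf{e}_{12}^{2}=0$; hence any product of two or more commutators is an identity of $UT_2(F)$, and modulo $Id(UT_2(F))$ I may assume $w=1$ or $w=[x_{i_1},\ldots,x_{i_n}]$ is a single commutator of length $n\ge2$. It then remains to normalize such a commutator. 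Antisymmetry of the innermost bracket gives $i_1>i_2$ (if the two innermost indices agree the commutator is $0$), and to sort the outer entries I would iterate the Jacobi identity in the form $[u,x_a,x_b]=[u,x_b,x_a]+[u,[x_a,x_b]]$: if $u$ is a commutator of length $\ge2$ then $[u,[x_a,x_b]]$ is a product of two commutators, hence $\equiv0\pmod{Id(UT_2(F))}$; the only delicate situation is $u=x_c$, i.e.\ a commutator of length $3$, where one checks directly that $[x_a,x_b,x_c]=[x_a,x_c,x_b]-[x_b,x_c,x_a]$ for $a>b>c$, both summands being already normalized. An induction on the length $n$, and for fixed $n$ on the number of descents among the entries in positions $2,\ldots,n$, then completes the reduction.

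\emph{Linear independence.} Here it suffices to show that a linear combination of distinct displayed elements lying in $Id(UT_2(F))$ has all coefficients zero, which I would obtain by a generic-matrix substitution: send $x_k\mapsto\xi_k:=a_k\mathbf{e}_{11}+b_k\mathbf{e}_{12}+(a_k-d_k)\mathbf{e}_{22}$ with $\{a_k,b_k,d_k\}$ algebraically independent commuting indeterminates. A short computation gives $[\xi_{i_1},\ldots,\xi_{i_n}]=(b_{i_2}d_{i_1}-b_{i_1}d_{i_2})\prod_{l=3}^{n}(-d_{i_l})\,\mathbf{e}_{12}$, while the image of $x_1^{r_1}\cdots x_m^{r_m}$ is upper triangular with $(1,1)$-entry $\prod_k a_k^{r_k}$; so the image of $x_1^{r_1}\cdots x_m^{r_m}[x_{i_1},\ldots,x_{i_n}]$ lives in the $\mathbf{e}_{12}$-corner with coefficient $(\prod_k a_k^{r_k})(b_{i_2}d_{i_1}-b_{i_1}d_{i_2})\prod_{l=3}^{n}(-d_{i_l})$. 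Comparing $(1,1)$-entries in a putative relation kills the coefficients of the pure monomials (the $\prod_k a_k^{r_k}$ are pairwise distinct monomials); comparing $(1,2)$-entries, the prefactor $\prod_k a_k^{r_k}$ decouples the relation over the exponent tuples, leaving me to check that the polynomials $(b_{i_2}d_{i_1}-b_{i_1}d_{i_2})\prod_{l=3}^{n}(-d_{i_l})$, over admissible sequences $i_1>i_2\le i_3\le\cdots\le i_n$, are independent in $F[\{b_k,d_k\}]$. This I would deduce from the fact that each such polynomial contains the "signature" monomial $b_{i_1}d_{i_2}d_{i_3}\cdots d_{i_n}$ with nonzero coefficient, that this monomial recovers the whole sequence (its $b$-index is $i_1$, its sorted $d$-indices are $i_2\le\cdots\le i_n$), and that it cannot be produced by any other term in the sum: the only alternative contribution would force all $d$-indices of some admissible sequence to be $\ge i_1>i_2$, contradicting that $i_2$ occurs among the $d$-indices of the signature.

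The step I expect to be the main obstacle is the commutator-normalization inside the spanning argument — setting up the induction measure so that each Jacobi correction is either visibly a product of two commutators (hence zero) or strictly smaller, with the length-$3$ identity as the genuinely computational base case. The linear-independence half is conceptually the point where it matters that $UT_2(F)$ is "just large enough" for these monomials to satisfy no further relations, but once the substitution above is fixed the remaining verification is routine.
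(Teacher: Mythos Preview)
Your argument is correct in both halves: the spanning reduction via the proper-polynomial decomposition together with $[x_1,x_2][x_3,x_4]\in Id(UT_2(F))$ and the Jacobi-based normalization of a single commutator is exactly the standard route, and your generic substitution $\xi_k=a_k\mathbf{e}_{11}+b_k\mathbf{e}_{12}+(a_k-d_k)\mathbf{e}_{22}$ with the ``signature monomial'' $b_{i_1}d_{i_2}\cdots d_{i_n}$ does establish linear independence (the key inequality $j_1>j_2=i_1>i_2$ and $j_3,\ldots,j_n\ge j_2$ indeed prevents the second term of any other $P_{(j)}$ from reproducing that monomial).

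There is, however, nothing to compare against: the paper does not prove this lemma at all. It simply records it as a known fact and cites \cite[Theorem~5.2.1]{drenskybook}; the adjacent Lemma~\ref{lema comutador ordenado} on commutator normalization is likewise quoted from the same source. What you have written is essentially the classical proof one finds there, so your proposal is fine as a self-contained replacement for the citation. If you want to tighten the write-up, the only place that deserves a couple more lines is the spanning step where the minimum variable is moved into position~$2$: you handle the length-$3$ base case explicitly, but for longer commutators you should say that swapping positions $2$ and $3$ via Jacobi yields two commutators \emph{both} of which already have the minimum in position~$2$, after which sorting positions $3,\ldots,n$ (adjacent swaps, correction terms being products of two commutators) finishes the job.
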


Consider the following order on variables: 
\[ z_1 < z_2 < \ldots < z_m < \ldots < y_1 < y_2 < \ldots < y_n < \ldots.\]
	The Lemma \ref{lema comutador ordenado} is an adaptation of \cite[Theorem 5.2.1]{drenskybook}.
\begin{lemma}\label{lema comutador ordenado}
	Let $w_i \in Y \cup Z$ and $u = [w_{i_1},\ldots,w_{i_n}]$ be a commutator in $F\left\langle Y\cup Z\right\rangle $, then $u = v + v'$ where $v$ is a linear combination of polynomials of type
	\[ [w_{j_1},\ldots,w_{j_n}] \] with $w_{j_1} > w_{j_2} \leq \ldots \leq w_{j_n}$ and $v'$ is a linear combination of products of two commutators in $F\left\langle Y\cup Z\right\rangle $. 
\end{lemma}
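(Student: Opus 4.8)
The plan is to proceed by induction on the length $n$ of the commutator $u = [w_{i_1},\dots,w_{i_n}]$, in complete analogy with the proof of the classical result \cite[Theorem 5.2.1]{drenskybook} for $F\langle X\rangle$; the only subtlety is to verify that the involution on $F\langle Y\cup Z\rangle$ does not interfere, which it does not, since the rewriting procedure never needs to apply $*$ — it only permutes the entries $w_{i_1},\dots,w_{i_n}$, each of which is a single generator in $Y\cup Z$. For $n=1$ there is nothing to prove, and for $n=2$, $[w_{i_1},w_{i_2}]$ is already in the desired form up to a sign (if $w_{i_1}=w_{i_2}$ it is $0$; if $w_{i_1}<w_{i_2}$ we use $[a,b]=-[b,a]$).

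For the inductive step, assume the statement for commutators of length $<n$ and write $u=[[w_{i_1},\dots,w_{i_{n-1}}],w_{i_n}]$. By the induction hypothesis, $[w_{i_1},\dots,w_{i_{n-1}}] = v_0 + v_0'$ where $v_0$ is a linear combination of ordered commutators $[w_{j_1},\dots,w_{j_{n-1}}]$ with $w_{j_1}>w_{j_2}\le\cdots\le w_{j_{n-1}}$ and $v_0'$ is a linear combination of products of two commutators. Then $[v_0',w_{i_n}]$, by the Leibniz rule $[ab,c]=a[b,c]+[a,c]b$, becomes a linear combination of products of two commutators (each bracket of a commutator with a generator is again a commutator, possibly to be re-expanded, but it stays a product of two commutators overall), so it goes into $v'$. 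It remains to handle $[[w_{j_1},\dots,w_{j_{n-1}}],w_{i_n}]$ for a single ordered commutator $c=[w_{j_1},\dots,w_{j_{n-1}}]$ with $w_{j_1}>w_{j_2}\le\cdots\le w_{j_{n-1}}$ and an arbitrary generator $w=w_{i_n}$. If $w\ge w_{j_{n-1}}$, or more precisely if appending $w$ at the end keeps the tail nondecreasing and $w_{j_1}$ is still the strict maximum of the first two entries, then $[c,w]$ is already an ordered commutator of length $n$ and goes into $v$.

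The main work — and the step I expect to be the principal obstacle — is the case where $w$ is \emph{smaller} than $w_{j_{n-1}}$ (or needs to be moved past some of the interior entries to restore the pattern $w_{j_1}>w_{j_2}\le\cdots\le w_{j_n}$). Here one uses the Jacobi identity in the form
\[
[[c',w_{j_{n-1}}],w] = [[c',w],w_{j_{n-1}}] + [c',[w_{j_{n-1}},w]],
\]
where $c'=[w_{j_1},\dots,w_{j_{n-2}}]$. The second term $[c',[w_{j_{n-1}},w]]$ is a commutator of the length-$(n-2)$ commutator $c'$ with the length-$2$ commutator $[w_{j_{n-1}},w]$; expanding by Leibniz shows it is a sum of products of two commutators, hence contributes to $v'$. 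The first term has $w$ moved one position to the left, so after finitely many applications of this move (an inner induction on the number of ``descents'' or on the position to which $w$ must be carried) we reduce to a commutator whose last entry already sits in the correct place; any commutators of length $<n$ produced along the way are fed back into the outer induction hypothesis, and all two-commutator products are collected into $v'$. Finally, if after this process the leading pair fails the condition $w_{j_1}>w_{j_2}$ — i.e. the first entry is not strictly largest — we again invoke the classical ordering lemma / the identity $[a,b]=-[b,a]$ at the top level together with Jacobi to swap the first two entries modulo products of two commutators, exactly as in the associative case. Collecting terms gives $u=v+v'$ of the required shape.
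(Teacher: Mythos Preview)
Your proposal is correct and follows the standard inductive argument using Jacobi and the Leibniz rule, which is precisely the content of \cite[Theorem 5.2.1]{drenskybook} that the paper cites; the paper itself gives no proof for this lemma, treating it as a direct adaptation of Drensky's result. One small point worth tightening: in the Jacobi step $[[c',w_{j_{n-1}}],w] = [[c',w],w_{j_{n-1}}] + [c',[w_{j_{n-1}},w]]$, the term $[c',[w_{j_{n-1}},w]]$ is a product of two commutators only when $c'$ has length $\ge 2$ (i.e.\ $n\ge 4$); for $n=3$ one handles $[[w_{j_1},w_{j_2}],w]$ directly via Jacobi and antisymmetry, which you should state explicitly.
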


 Denote by $\mathcal{B}$ the subalgebra of $UT_4(F)$ defined by the matrices 
 \begin{equation*}
 \left(
 \begin{array}{cc}
 A&0 \\
 0&B
 \end{array}
 \right)
 \end{equation*} where $A,B \in UT_2(F)$. 	Obviously, 
 \begin{align} \label{lema B}	
 Id(UT_2(F)) \subseteq Id(\mathcal{B})
 \end{align}
 
 Proposition \ref{propo base de B} describe the $*$-polynomial identities of $\mathcal{B}$.
\begin{proposition}\label{propo base de B}
	Let $F$ be an infinity field. Then, a  basis for $B / (B \cap Id(\mathcal{B},*))$ is given by
\begin{align}\label{base de B}
	z_1^{r_1} \ldots z_m^{r_m} [w_{j_1},\ldots,w_{j_s}]^{\theta} + B \cap Id(\mathcal{B},*)
	\end{align} where $r_i \geq 0$, $\theta \in \{0,1\}$, $w_{j_1} > w_{j_2} \leq \ldots \leq w_{j_s}.$
	\end{proposition}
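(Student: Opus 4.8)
The plan is to reduce the problem to the ordinary (non-involutory) polynomial identities of $UT_2(F)$, for which Lemma \ref{lema base de Udos} already supplies a basis, and to transport that basis back to $B$. The starting point is a description of $(\mathcal{B},*)$: for $A,B\in UT_2(F)$ write $\operatorname{diag}(A,B)$ for the corresponding block-diagonal matrix of $\mathcal{B}$. Writing the permutation matrix $J$ in $2\times 2$ blocks, a direct computation gives
\[
\operatorname{diag}(A,B)^{*}=\operatorname{diag}(B^{*},A^{*}),
\]
where on the right $A^{*}$ denotes the reflection involution of $UT_2(F)$; hence $\mathcal{B}^{+}=\{\operatorname{diag}(A,A^{*}):A\in UT_2(F)\}$ and $\mathcal{B}^{-}=\{\operatorname{diag}(A,-A^{*}):A\in UT_2(F)\}$. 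For fixed $m,n$ let $\pi\colon F\langle z_1,\dots,z_m,y_1,\dots,y_n\rangle\to F\langle x_1,\dots,x_{m+n}\rangle$ be the order-preserving relabelling $\pi(z_i)=x_i$, $\pi(y_j)=x_{m+j}$, an isomorphism of algebras. Since $\operatorname{diag}\colon UT_2(F)\times UT_2(F)\to\mathcal{B}$ is an algebra homomorphism, substituting symmetric elements for the $y$'s and skew ones for the $z$'s in a polynomial $f$ yields the block-diagonal matrix whose diagonal blocks are $f(A_1,\dots,A_n,C_1,\dots,C_m)$ and $f(A_1^{*},\dots,A_n^{*},-C_1^{*},\dots,-C_m^{*})$. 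As $A\mapsto A^{*}$ and $A\mapsto -A^{*}$ are bijections of $UT_2(F)$, both blocks vanish for all choices of the arguments in $UT_2(F)$ if and only if $\pi(f)\in Id(UT_2(F))$; thus
\[
f\in Id(\mathcal{B},*)\quad\Longleftrightarrow\quad\pi(f)\in Id(UT_2(F)),
\]
which also makes the inclusion \eqref{lema B} transparent.

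For spanning, note that $[\mathcal{B},\mathcal{B}]$ consists of block-diagonal matrices whose two blocks are strictly upper triangular $2\times 2$ matrices, and the product of two strictly upper triangular $2\times 2$ matrices is $0$; hence any product of two or more commutators, and any left multiple of such a product, lies in $Id(\mathcal{B},*)$. Now take a spanning element $z_1^{r_1}\cdots z_m^{r_m}f_1\cdots f_t$ of $B$, where the $f_i$ are commutators. If $t\ge 2$ it is a left multiple of $f_1f_2$, hence an element of $Id(\mathcal{B},*)$. If $t=1$, applying Lemma \ref{lema comutador ordenado} to $f_1$ writes it, modulo products of two commutators, as a linear combination of ordered left-normed commutators $[w_{j_1},\dots,w_{j_s}]$ with $w_{j_1}>w_{j_2}\le\cdots\le w_{j_s}$; multiplying on the left by $z_1^{r_1}\cdots z_m^{r_m}$ and discarding the (identically vanishing) error terms, $z_1^{r_1}\cdots z_m^{r_m}f_1$ becomes, modulo $Id(\mathcal{B},*)$, a linear combination of elements \eqref{base de B} with $\theta=1$. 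If $t=0$ the element already has the form \eqref{base de B} with $\theta=0$. Hence the elements \eqref{base de B} span $B/(B\cap Id(\mathcal{B},*))$.

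For linear independence, suppose a finite $F$-linear combination of elements \eqref{base de B} lies in $Id(\mathcal{B},*)$; all variables occurring in it lie in $\{z_1,\dots,z_m,y_1,\dots,y_n\}$ for suitable $m,n$, and we apply the isomorphism $\pi$ above. Since $\pi$ respects the order of the variables, each $z_1^{r_1}\cdots z_m^{r_m}[w_{j_1},\dots,w_{j_s}]^{\theta}$ with $w_{j_1}>w_{j_2}\le\cdots\le w_{j_s}$ is sent to $x_1^{r_1}\cdots x_m^{r_m}[x_{k_1},\dots,x_{k_s}]^{\theta}$ with $k_1>k_2\le\cdots\le k_s$, that is, to one of the basis elements of $F\langle X\rangle/Id(UT_2(F))$ furnished by Lemma \ref{lema base de Udos}, and distinct elements of \eqref{base de B} are sent to distinct ones. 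By the first paragraph the image of the whole combination lies in $Id(UT_2(F))$, so Lemma \ref{lema base de Udos} forces every coefficient to vanish.

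The only points that require genuine care are combinatorial: one must verify that Lemma \ref{lema comutador ordenado} introduces, in the spanning step, no error terms other than products of two commutators, so that the reduction terminates exactly at the normal form \eqref{base de B}; and one must confirm that, under $\pi$, the family \eqref{base de B} corresponds precisely to those basis elements of Lemma \ref{lema base de Udos} whose free leading part involves only the first $m$ variables (so that the passage to $UT_2(F)$ is a bijection onto a subset of a basis, not merely a surjection onto a spanning set). With the equivalence $f\in Id(\mathcal{B},*)\Leftrightarrow\pi(f)\in Id(UT_2(F))$ in hand, everything else is formal.
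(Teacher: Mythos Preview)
Your proof is correct and follows essentially the same approach as the paper: both arguments establish spanning via the vanishing of products of two commutators together with Lemma~\ref{lema comutador ordenado}, and both establish linear independence by substituting the block-diagonal symmetric and skew elements $\operatorname{diag}(A,A^{*})$, $\operatorname{diag}(C,-C^{*})$, reading off the first block, and invoking Lemma~\ref{lema base de Udos}. Your explicit formulation of the equivalence $f\in Id(\mathcal{B},*)\Leftrightarrow\pi(f)\in Id(UT_2(F))$ is a clean packaging of the same substitution the paper uses, not a different idea.
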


	\begin{proof}
	By   (\ref{lema B}) we have to the product of two commutators in $F \left\langle Y \cup Z \right\rangle$ is contained in $Id(\mathcal{B},*)$. Next, by Lemma \ref{lema comutador ordenado}, we have to each $Y$-proper polynomial, module $Id(\mathcal{B},*)$, is a  linear combination of elements in (\ref{base de B}).

	Let $f(z_1,\ldots,z_m,y_1,\ldots,y_n) \in F\left\langle Y \cup Z \right\rangle $ be a linear combination of elements in (\ref{base de B}) such that $f\in Id(\mathcal{B},*)$. Put $$x_1 = z_1, \ldots, x_m = z_m, x_{m+1} = y_1, \ldots, x_{m+n} = y_n,$$ $f$ can be written in the form:
	\[ f = \sum_{r,j} \alpha_{r,j} z_1^{r_1} \ldots z_m^{r_m} [x_{j_1},\ldots,x_{j_s}] + \sum_{r} \alpha_r z_1^{r_1} \ldots z_m^{r_m}  \] 
	where $\alpha_{r,j}, \alpha_r \in F$, $r = (r_1,\ldots,r_m)$, $j = (j_1,\ldots,j_s)$ and ${j_1} > {j_2} \leq \ldots \leq {j_s}$.
	Let $A_i,B_k \in UT_2(F)$, then 
	\[ Y_i = 
	\begin{pmatrix}
	A_i & 0 \\
	0 & A_i^*
	\end{pmatrix} \in \mathcal{B}^+ \ \ \mbox{and} \ \ Z_k = 
	\begin{pmatrix}
	B_k & 0 \\
	0 & - B_k^*
	\end{pmatrix} \in \mathcal{B}^-. \]
	By substituting in $f$, we have to $f(Z_1,\ldots,Z_m,Y_1,\ldots,Y_n) = 0$.
Thus,	it follows that  $f(B_1,\ldots,B_m,A_1,\ldots,A_n) = 0$. Since $A_i$ and $B_k$ are arbitrary, we have to $f(x_1, \ldots, x_m,x_{m+1},\ldots,x_{m+n} ) \in Id(UT_2(F))$ seen as element of $F\left\langle X \right\rangle $. By Lemma \ref{lema base de Udos} the proof is complete.
\end{proof}

\section{$*$-identities for $\mathcal{A}$}

Let $A_1,A_2,A_3 \in UT_2(F)$, it is easy to check that:
\begin{itemize}
	\item[i)] $[A_1,A_2]^* = [A_1,A_2]$.
	\item[ii)] $[A_1,A_2] (A_3 - A_3^*) = [A_1,A_2,A_3]$.
	\item[iii)] If $C \in UT_2(F)^+$ then $A_1 C - C A_1^* = \lambda (A_1 - A_1^*)$, where $\lambda = 2^{-1}tr (C)$ (Here $tr(C)$ is the sum of the diagonal elements of $C$).
\end{itemize}

If  $Y \in \mathcal{A}^+$  then
\[ Y = 
\begin{pmatrix}
A & C \\
0 & A^*
\end{pmatrix}\] for some $A,C\in UT_2(F)$ with $C^* = C$. If 
$Z \in \mathcal{A}^-$ then
\[Z =
\begin{pmatrix}
B & D \\
0 & - B^*
\end{pmatrix}\]
for some  $B,D\in UT_2(F)$ with $D^* = -D$.

Denote by $\mathbf{e}_{ij}$, the element of $M_2(F)$ with exactly one nonzero entry ($i$-row and $j$-column), which is $1$. 
We are going to give some facts about of the $\mathcal{A}$.
\begin{lemma}\label{lema das identidades de A}
	Let $P_i$, $1 \leq i \leq 6$,
	be arbitrary elements of $\mathcal{A}$. 
	Then 
	\begin{itemize}
		\item[i)]  The matrices 
		$$[P_1,P_2] [P_3,P_4],\quad [P_1,P_2] P_3 [P_4,P_5]\quad \mbox{and} \quad [P_1,P_2] [P_3,P_4] P_5$$ are of the type $
		\begin{pmatrix}
		0 & \alpha \mathbf{e}_{12} \\
		0 & 0
		\end{pmatrix}$, for some $\alpha \in F$. 
		\item [ii)] $[P_1,P_2][P_3,P_4][P_5,P_6] = 0$.
	\end{itemize}
\end{lemma}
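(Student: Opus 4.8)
The plan is to pass to the block form and reduce everything to two simple closure properties. Write each $P_i = \begin{pmatrix} A_i & C_i \\ 0 & B_i \end{pmatrix}$ with $A_i, B_i, C_i \in UT_2(F)$ (the involution plays no role in this lemma, since the $P_i$ are arbitrary elements of $\mathcal{A}$). I will repeatedly use the elementary facts that, for any $M \in UT_2(F)$, one has $\mathbf{e}_{12}M = M_{22}\mathbf{e}_{12}$ and $M\mathbf{e}_{12} = M_{11}\mathbf{e}_{12}$, that $\mathbf{e}_{12}^2 = 0$, and that any commutator $[M_1,M_2]$ in $UT_2(F)$ is a scalar multiple of $\mathbf{e}_{12}$ (implicit in item i) of the list preceding the lemma).

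First I would introduce a shorthand: call $Q \in \mathcal{A}$ \emph{reduced} if $Q = \begin{pmatrix} a\mathbf{e}_{12} & W \\ 0 & b\mathbf{e}_{12}\end{pmatrix}$ for some $a,b \in F$ and $W \in UT_2(F)$, and \emph{strongly reduced} if moreover $a=b=0$. Multiplying $P_1$ by $P_2$ gives $[P_1,P_2] = \begin{pmatrix} [A_1,A_2] & A_1C_2 + C_1B_2 - A_2C_1 - C_2B_1 \\ 0 & [B_1,B_2]\end{pmatrix}$, so every commutator of elements of $\mathcal{A}$ is reduced. Next I would verify the two closure properties: (a) if $Q$ is reduced and $P \in \mathcal{A}$, then $QP$ and $PQ$ are reduced — immediate from $\mathbf{e}_{12}M, M\mathbf{e}_{12} \in F\mathbf{e}_{12}$ applied to the two diagonal blocks; and (b) if $Q$ and $Q'$ are reduced, then $QQ'$ is strongly reduced — the diagonal blocks of $QQ'$ are multiples of $\mathbf{e}_{12}^2 = 0$, and the upper-right block equals $a\mathbf{e}_{12}W' + b'W\mathbf{e}_{12} = (aW'_{22} + b'W_{11})\mathbf{e}_{12}$.

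With (a) and (b), part (i) is immediate: $[P_1,P_2][P_3,P_4]$ is a product of two reduced matrices, hence strongly reduced; grouping $[P_1,P_2]P_3[P_4,P_5]$ as $\big([P_1,P_2]P_3\big)[P_4,P_5]$, the first factor is reduced by (a), so the product is strongly reduced by (b); grouping $[P_1,P_2][P_3,P_4]P_5$ as $[P_1,P_2]\big([P_3,P_4]P_5\big)$ works the same way. In each case the result is $\begin{pmatrix} 0 & \alpha\mathbf{e}_{12} \\ 0 & 0\end{pmatrix}$. For part (ii), $[P_1,P_2][P_3,P_4][P_5,P_6]$ is (strongly reduced)$\,\cdot\,$(reduced), and $\begin{pmatrix} 0 & \alpha\mathbf{e}_{12} \\ 0 & 0\end{pmatrix}\begin{pmatrix} c\mathbf{e}_{12} & W \\ 0 & d\mathbf{e}_{12}\end{pmatrix} = \begin{pmatrix} 0 & \alpha d\mathbf{e}_{12}^2 \\ 0 & 0\end{pmatrix} = 0$.

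There is no genuine obstacle here; the computation is routine. The only points requiring a little care are the bookkeeping — keeping straight which partial products are merely reduced and which are strongly reduced — and checking that the class of reduced matrices is closed under one-sided multiplication by an arbitrary element of $\mathcal{A}$, not merely by products of commutators; this last point is precisely what lets us absorb the loose factor $P_3$ (resp.\ $P_5$) into a neighbouring commutator.
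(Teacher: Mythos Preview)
Your proof is correct and follows essentially the same approach as the paper: both observe that commutators and commutators-times-an-element have the ``reduced'' block form $\begin{pmatrix} \alpha\mathbf{e}_{12} & \Theta \\ 0 & \beta\mathbf{e}_{12}\end{pmatrix}$, then compute that a product of two such matrices is strongly reduced and that a further product with a reduced matrix vanishes. Your explicit naming of the two closure properties (a) and (b) makes the bookkeeping slightly cleaner than the paper's version, but the argument is the same.
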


\begin{proof} $i)$
	The commutators  $[P_1,P_2]$ and $[P_1,P_2] P_3$ are matrices of the type 
	\begin{equation}\label{produto de dois comutadores A}
	\begin{pmatrix} \alpha \mathbf{e}_{12} & \Theta \\ 0 & \beta \mathbf{e}_{12} \end{pmatrix}
	\end{equation}
	for some $\alpha,\beta \in F$, $\Theta \in UT_2(F)$. 
	Product of two matrices of type (\ref{produto de dois comutadores A}) is given by 
	\begin{equation*} 
	\begin{pmatrix}
	\alpha_1 \mathbf{e}_{12} & \Theta_1 \\
	0 & \beta_1 \mathbf{e}_{12}
	\end{pmatrix}
	\begin{pmatrix}
	\alpha_2 \mathbf{e}_{12} & \Theta_2 \\
	0 & \beta_2 \mathbf{e}_{12}
	\end{pmatrix} = 
	\begin{pmatrix}
	0 & \Theta \\
	0 & 0
	\end{pmatrix}\end{equation*} where $\Theta = \alpha_1 \mathbf{e}_{12} \Theta_2 + \Theta_1 \beta_2 \mathbf{e}_{12} = \alpha \mathbf{e}_{12}$, for some $\alpha \in F$.  
	
	\noindent $ii)$ Product of three matrices of type (\ref{produto de dois comutadores A}) is 
	\[\begin{pmatrix}
	0 & \alpha \mathbf{e}_{12} \\
	0 & 0
	\end{pmatrix}
	\begin{pmatrix}
	\alpha_3 \mathbf{e}_{12} & \Theta_3 \\
	0 & \beta_3 \mathbf{e}_{12}
	\end{pmatrix} = 0. \] 
	The proof is complete.
\end{proof}

\begin{lemma}\label{lema identidade geral de Y}
	Let $Y  \in \mathcal{A}^+$. Then, there is  $\alpha \in F$ such that
	\[ Q_1 Y Q_2 = \alpha Q_1 Q_2. \] for all $Q_1,Q_2 \in \mathcal{A}$ where 
	$$Q_i = \begin{pmatrix} \alpha_i \mathbf{e}_{12} & \Theta_i \\ 0 & \beta_i \mathbf{e}_{12} \end{pmatrix}$$
	and $\alpha_i,\beta_i \in F$, $\Theta_i \in UT_2(F)$, $i=1,2$.
	
\end{lemma}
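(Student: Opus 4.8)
The plan is to verify the identity by a direct $2\times 2$ block computation, so no clever trick is needed. Write $Y=\begin{pmatrix} A & C \\ 0 & A^{*}\end{pmatrix}$ with $A,C\in UT_2(F)$ and let $a_{11},a_{12},a_{22}$ denote the entries of $A$ (the symmetry $C^{*}=C$ will play no role). Since the reflection involution on $UT_2(F)$ interchanges the two diagonal entries, $A^{*}=\begin{pmatrix} a_{22} & a_{12} \\ 0 & a_{11}\end{pmatrix}$. The whole argument rests on two elementary facts: $\mathbf{e}_{12}^{2}=0$, and, for every $M\in UT_2(F)$ with diagonal entries $m_{11},m_{22}$, the relations $\mathbf{e}_{12}M=m_{22}\mathbf{e}_{12}$ and $M\mathbf{e}_{12}=m_{11}\mathbf{e}_{12}$. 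In particular $\mathbf{e}_{12}A=a_{22}\mathbf{e}_{12}$ and $A^{*}\mathbf{e}_{12}=a_{22}\mathbf{e}_{12}$: the scalar extracted on the ``$A$ side'' and on the ``$A^{*}$ side'' is the \emph{same} element $a_{22}$, which is precisely what the condition $Y^{*}=Y$ buys us.

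First I would form $Q_1Y$ and then multiply on the right by $Q_2$, reading off its four blocks. The $(1,1)$- and $(2,2)$-blocks of $Q_1YQ_2$ come out as $\alpha_1\alpha_2\,\mathbf{e}_{12}A\mathbf{e}_{12}$ and $\beta_1\beta_2\,\mathbf{e}_{12}A^{*}\mathbf{e}_{12}$, both zero by the two facts above. The $(1,2)$-block equals $\alpha_1\,\mathbf{e}_{12}A\Theta_2+\alpha_1\beta_2\,\mathbf{e}_{12}C\mathbf{e}_{12}+\beta_2\,\Theta_1A^{*}\mathbf{e}_{12}$; the middle term vanishes because $\mathbf{e}_{12}C\mathbf{e}_{12}=0$, so the entire $C$-contribution disappears, and the other two terms collapse to $a_{22}\bigl(\alpha_1\,\mathbf{e}_{12}\Theta_2+\beta_2\,\Theta_1\mathbf{e}_{12}\bigr)$.

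Next I would compute $Q_1Q_2$ in the same way, now using only $\mathbf{e}_{12}^{2}=0$, obtaining $Q_1Q_2=\begin{pmatrix} 0 & \alpha_1\,\mathbf{e}_{12}\Theta_2+\beta_2\,\Theta_1\mathbf{e}_{12} \\ 0 & 0\end{pmatrix}$. Comparing the two outcomes gives $Q_1YQ_2=a_{22}\,Q_1Q_2$, so the assertion holds with $\alpha:=a_{22}$, the $(2,2)$-entry of $Y$; this depends on $Y$ alone and not on $Q_1,Q_2$.

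I do not expect any genuine obstacle: the proof is bookkeeping. The only two points requiring a moment of attention are that the scalar pulled out of $\mathbf{e}_{12}A$ and the one pulled out of $A^{*}\mathbf{e}_{12}$ are literally the same number $a_{22}$ (which, again, is forced by $Y\in\mathcal{A}^{+}$), and that $C$ never survives, since in the only block where it could contribute it is flanked by two copies of $\mathbf{e}_{12}$.
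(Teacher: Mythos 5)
Your proof is correct and arrives at the same conclusion as the paper ($\alpha$ equals the $(2,2)$-entry of the upper-left block $A$ of $Y$, the paper's $b$), and it is essentially the same direct computation: where the paper splits $Y=Y'+Y''$ and checks $Q_1Y'Q_2=0$ and $Q_1Y''Q_2=bQ_1Q_2$, you track the blocks of $Q_1YQ_2$ and of $Q_1Q_2$ and compare. Your two key observations --- that $C$ dies between two copies of $\mathbf{e}_{12}$, and that the reflection involution forces the scalars extracted from $\mathbf{e}_{12}A$ and from $A^{*}\mathbf{e}_{12}$ to be the same $a_{22}$ --- are exactly what makes the paper's splitting work, so nothing is missing.
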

\begin{proof} Let $Y = \begin{pmatrix}	A & C \\ 0 & A^* \end{pmatrix}$ where
	$A = a \mathbf{e}_{11} + b \mathbf{e}_{22} + c \mathbf{e}_{12} $
	and $C \in UT_2(F)$. We can write  $Y = Y' + Y''$ where
	\[ Y' = 
	\begin{pmatrix}
	a \mathbf{e}_{11} + c \mathbf{e}_{12} & C \\
	0 & a \mathbf{e}_{22} + c \mathbf{e}_{12}
	\end{pmatrix} 
	\ \ \mbox{and} \ \ Y'' = 
	\begin{pmatrix} b \mathbf{e}_{22} & 0 \\ 0 & b \mathbf{e}_{11} \end{pmatrix}.
	\] We have to
	$Q_1 Y' = 
	\begin{pmatrix}
	0 & \Theta \\
	0 & \beta_1 a \mathbf{e}_{12}
	\end{pmatrix}$ with $\Theta$ satisfying $\Theta = \Theta \mathbf{e}_{22}.$	
	Thus,  $Q_1 Y' Q_2 = 0$.
	It is easy to verify that $Q_1 Y'' Q_2 = b Q_1 Q_2.$ 
	This proof is complete.
\end{proof}

\begin{proposition}\label{propo identidades A}
	Let $v_1,v_2,v_3$  be commutators of $F \left\langle Y \cup Z \right\rangle$ and let $w \in Y \cup Z$. Then, the  following polynomials \[ v_1 v_2 v_3 \qquad  v_1 w v_2 - (v_1 w v_2)^{*} \qquad  v_1 v_2 w - (v_1 v_2 w)^{*}\] are $*$-polynomial identities for $\mathcal{A}$. In particular, $v_1 v_2 - v_2^{*} v_1^{*} \in Id(\mathcal{A},*)$.
\end{proposition}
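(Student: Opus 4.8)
The plan is to show that each displayed polynomial vanishes under every admissible substitution $y_i\mapsto Y_i\in\mathcal{A}^+$, $z_j\mapsto Z_j\in\mathcal{A}^-$; since every variable is then sent to an element of $\mathcal{A}$, it suffices to track how products of commutators behave inside $\mathcal{A}$. The structural step I would isolate is that the set $N\subseteq\mathcal{A}$ of matrices of the block form (\ref{produto de dois comutadores A}) is a two-sided ideal of $\mathcal{A}$: for $Q$ of that form and $P\in\mathcal{A}$ the diagonal blocks of $QP$ and of $PQ$ are scalar multiples of $\mathbf{e}_{12}$, because $\mathbf{e}_{12}M,\ M\mathbf{e}_{12}\in F\mathbf{e}_{12}$ for all $M\in UT_2(F)$. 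It follows that every evaluation on $\mathcal{A}$ of a commutator of length $\geq 2$ lies in $N$: the length-$2$ case holds because $[UT_2(F),UT_2(F)]\subseteq F\mathbf{e}_{12}$ puts both diagonal blocks of $[P_1,P_2]$ into $F\mathbf{e}_{12}$, and for a left-normed commutator $[w_{i_1},\dots,w_{i_n}]=[[w_{i_1},\dots,w_{i_{n-1}}],w_{i_n}]$ it is then automatic, $N$ being an ideal. Finally, exactly as computed in the proof of Lemma~\ref{lema das identidades de A}, $N^2$ is contained in the line of matrices $\left(\begin{smallmatrix}0&\alpha\mathbf{e}_{12}\\0&0\end{smallmatrix}\right)$, i.e. in $F\mathbf{e}_{14}$ in $4\times4$ matrix-unit notation, and $N\cdot F\mathbf{e}_{14}=F\mathbf{e}_{14}\cdot N=0$, so $N^3=0$.

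Granting this, the three identities drop out. Under any admissible substitution $v_1,v_2,v_3$ become matrices in $N$ and $w$ a matrix $P\in\mathcal{A}$, so $v_1v_2v_3$ evaluates into $N^3=0$; $v_1wv_2$ evaluates into $N\mathcal{A}N\subseteq N^2\subseteq F\mathbf{e}_{14}$; and $v_1v_2w$ evaluates into $N^2\mathcal{A}\subseteq F\mathbf{e}_{14}$, using that $F\mathbf{e}_{14}$ is stable under multiplication by $\mathcal{A}$ on either side. Since the reflection involution $M\mapsto JM^tJ$ fixes $\mathbf{e}_{14}$, we have $F\mathbf{e}_{14}\subseteq UT_4(F)^+$; hence $v_1wv_2$ and $v_1v_2w$ take symmetric values on all admissible substitutions, which is precisely the statement that $v_1wv_2-(v_1wv_2)^*$ and $v_1v_2w-(v_1v_2w)^*$ lie in $Id(\mathcal{A},*)$. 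For $w\in Y$ one could alternatively invoke Lemma~\ref{lema identidade geral de Y} to rewrite $v_1wv_2$ as a scalar multiple of $v_1v_2$, but the ideal computation also covers $w\in Z$. The last claim is the same phenomenon: $v_1v_2$ evaluates into $N^2\subseteq F\mathbf{e}_{14}\subseteq UT_4(F)^+$, so it agrees with $(v_1v_2)^*=v_2^*v_1^*$ on all admissible substitutions, i.e. $v_1v_2-v_2^*v_1^*\in Id(\mathcal{A},*)$.

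I do not expect a genuine obstacle here: the proof is bookkeeping organized around the observation that all the products involved collapse into the one-dimensional corner $F\mathbf{e}_{14}$, which is pointwise symmetric for the reflection involution. The only point that goes slightly beyond what is already quoted is promoting the block-form description from length-$2$ commutators (handled in Lemma~\ref{lema das identidades de A}) to commutators of arbitrary length, which is why I would phrase the structural step through the ideal $N$ rather than re-deriving each case by hand.
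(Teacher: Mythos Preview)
Your proof is correct and follows essentially the same route as the paper: both reduce to Lemma~\ref{lema das identidades de A}, observing that the relevant products land in the one-dimensional corner $F\mathbf{e}_{14}$, which is pointwise fixed by the reflection involution. Your organization around the two-sided ideal $N$ is a cleaner packaging and, as you note, makes explicit the passage from length-$2$ to arbitrary-length commutators, which the paper's terse proof leaves to the reader (the ingredients are in the proof of Lemma~\ref{lema das identidades de A}, where it is shown that both $[P_1,P_2]$ and $[P_1,P_2]P_3$ have the block form~(\ref{produto de dois comutadores A}), so the ideal property is implicit).
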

\begin{proof}
The first polynomial is $*$-identity for $\mathcal{A}$ by item $ii)$ of the Lemma \ref{lema das identidades de A}. 
Let $P_i \in \mathcal{A}^+ \cup \mathcal{A}^-$, then by item $i)$ of the Lemma \ref{lema das identidades de A} we obtain \[ ([P_1,P_2] P_3 [P_4,P_5])^* = [P_1,P_2] P_3 [P_4,P_5], \] \[   ([P_1,P_2] [P_3,P_4] P_5)^* = [P_1,P_2][P_3,P_4] P_5.\] 
\end{proof}

\begin{proposition}\label{identidade Y}
	The polynomial \[ [y_4,y_3][y_2,y_1] + [y_3,y_2][y_4,y_1] + [y_2,y_4][y_3,y_1]\] is $*$-polynomial identity for $\mathcal{A}$.
	\end{proposition}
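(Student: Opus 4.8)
The plan is to prove the identity by direct substitution: since every variable occurring in the polynomial is symmetric, it suffices to take arbitrary $Y_1,Y_2,Y_3,Y_4\in\mathcal{A}^{+}$ and show the value is $0$. First I would record normal forms. Write $Y_i=\left(\begin{smallmatrix}A_i & C_i\\ 0 & A_i^{*}\end{smallmatrix}\right)$ with $A_i,C_i\in UT_2(F)$ and $C_i^{*}=C_i$. Because $*$ interchanges the two diagonal entries of a $2\times 2$ upper triangular matrix and fixes the off-diagonal one, $C_i^{*}=C_i$ forces $C_i=p_iI_2+r_i\mathbf{e}_{12}$ for scalars $p_i,r_i$. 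Writing $A_i=a_i\mathbf{e}_{11}+b_i\mathbf{e}_{22}+c_i\mathbf{e}_{12}$ and $d_i=a_i-b_i$, one has $A_i-A_i^{*}=d_i(\mathbf{e}_{11}-\mathbf{e}_{22})$, and (a commutator of $UT_2(F)$ being a multiple of $\mathbf{e}_{12}$) $[A_i,A_j]=\alpha_{ij}\mathbf{e}_{12}$ with $\alpha_{ij}=c_jd_i-c_id_j$; by item i) of the list preceding Lemma~\ref{lema das identidades de A} also $[A_i^{*},A_j^{*}]=[A_j,A_i]=-\alpha_{ij}\mathbf{e}_{12}$.

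The first computational step is to express a single commutator $[Y_i,Y_j]$ blockwise. Its diagonal blocks are the matrices $[A_i,A_j]$ and $[A_i^{*},A_j^{*}]$ just found, and its off-diagonal block is $A_iC_j+C_iA_j^{*}-A_jC_i-C_jA_i^{*}$. Here I would use item iii) of that same list (with the scalar $\lambda$ equal to half the trace of $C_j$, i.e.\ $\lambda=p_j$, and symmetrically with $p_i$) to rewrite $A_iC_j-C_jA_i^{*}=p_j(A_i-A_i^{*})$ and $C_iA_j^{*}-A_jC_i=-p_i(A_j-A_j^{*})$, so that this off-diagonal block equals $\mu_{ij}(\mathbf{e}_{11}-\mathbf{e}_{22})$ with $\mu_{ij}=p_jd_i-p_id_j$. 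Then, using $\mathbf{e}_{12}(\mathbf{e}_{11}-\mathbf{e}_{22})=-\mathbf{e}_{12}$ and $(\mathbf{e}_{11}-\mathbf{e}_{22})\mathbf{e}_{12}=\mathbf{e}_{12}$, the product of two such commutators has both diagonal blocks zero (as Lemma~\ref{lema das identidades de A}(i) already guarantees) and off-diagonal block $-(\alpha_{ij}\mu_{kl}+\alpha_{kl}\mu_{ij})\mathbf{e}_{12}$.

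Next I would introduce the shorthand $S_{ij,kl}=\alpha_{ij}\mu_{kl}+\alpha_{kl}\mu_{ij}$, which is alternating in $\{i,j\}$ and in $\{k,l\}$. Substituting the three monomials of the polynomial in the statement and using these antisymmetries to reorder index pairs, I would obtain that the value at $(Y_1,Y_2,Y_3,Y_4)$ is the matrix whose only possibly nonzero block is the off-diagonal one, equal to $-(S_{12,34}+S_{14,23}-S_{13,24})\mathbf{e}_{12}$. The last and conceptually crucial step is to observe that this combination of scalars is a polarization of a Plücker relation: the $2\times 2$ minors $\alpha_{ij}$ of the $2\times 4$ matrix with rows $(d_1,d_2,d_3,d_4)$ and $(c_1,c_2,c_3,c_4)$ satisfy $\alpha_{12}\alpha_{34}-\alpha_{13}\alpha_{24}+\alpha_{14}\alpha_{23}=0$; replacing $c_i$ by $c_i+t\,p_i$ turns each $\alpha_{ij}$ into $\alpha_{ij}+t\,\mu_{ij}$, and the coefficient of $t$ in this relation is exactly $S_{12,34}-S_{13,24}+S_{14,23}=0$. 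Hence the polynomial vanishes on $\mathcal{A}^{+}$.

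I expect the main obstacle to be the careful bookkeeping in the middle step: pinning down the off-diagonal block of $[Y_i,Y_j]$ as a scalar multiple of $\mathbf{e}_{11}-\mathbf{e}_{22}$, and then keeping track of signs through the product of two commutators, so that the resulting scalar is $S_{12,34}+S_{14,23}-S_{13,24}$ up to an overall sign. Once this is in hand, recognizing the expression as the polarized quadratic Plücker relation for a $2\times 4$ matrix finishes the argument cleanly; a less elegant but fully self-contained alternative is to expand $S_{12,34}+S_{14,23}-S_{13,24}$ in the scalars $c_i,p_i,d_i$ and check that the monomials of type $c_a p_b d_e d_f$ cancel in pairs.
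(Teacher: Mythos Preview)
Your argument is correct. The setup and the block computation of $[Y_i,Y_j]$ (via item~iii) preceding Lemma~\ref{lema das identidades de A}) and of the product $[Y_i,Y_j][Y_k,Y_l]$ coincide with what the paper does: the paper also finds the off-diagonal block $\Lambda_{ij}=\lambda_j(A_i-A_i^{*})-\lambda_i(A_j-A_j^{*})$, with $\lambda_i=\tfrac12\operatorname{tr}(C_i)$ playing the role of your $p_i$, and then observes that the only possibly nonzero block of the product sits in the $(1,2)$ corner.

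Where the two proofs diverge is the final cancellation. The paper keeps the computation at the level of $UT_2(F)$: using item~ii) it rewrites the $(1,2)$ block of $[Y_4,Y_3][Y_2,Y_1]$ as
\[
\Theta_2=\lambda_1[A_4,A_3,A_2]+\lambda_2[A_3,A_4,A_1]+\lambda_3[A_2,A_1,A_4]+\lambda_4[A_1,A_2,A_3],
\]
does the same for the other two terms, and then concludes $\Theta_2+\Theta_3+\Theta_4=0$ by applying the Jacobi identity to the coefficient of each $\lambda_i$. You instead pass to scalar coordinates (all commutators are multiples of $\mathbf e_{12}$) and recognize the resulting expression $S_{12,34}-S_{13,24}+S_{14,23}$ as the polarization of the Pl\"ucker relation for $2\times4$ minors. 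Both arguments are short and correct; the paper's has the advantage of never introducing coordinates for $A_i$ and of making transparent that the identity is really four instances of Jacobi, while yours makes the underlying bilinear and alternating structure explicit and would generalize more readily to analogous scalar computations.
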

	\begin{proof}
	Let $Y_i = 
	\begin{pmatrix}
	A_i & C_i \\
	0 & A_i^*
	\end{pmatrix}\in \mathcal{A}^+$, $1\leq i \leq 4$. We have to
	\[ [Y_2,Y_1] = 
	\begin{pmatrix}
	[A_2,A_1] & \Lambda_{21} \\
	0 & - [A_2,A_1]
	\end{pmatrix}\] where $\Lambda_{21} = A_2 C_1 + C_2 A_1^* -A_1 C_2 - C_1 A_2^* = \lambda_1 (A_2 - A_2^*) - \lambda_2 (A_1 - A_1^*)  $ where $\lambda_i = 2^{-1} tr (C_i)$. Thus,
	\[ [Y_4,Y_3][Y_2,Y_1] 
	=
	\begin{pmatrix}
	0 & \Theta_{2} \\
	0 & 0
	\end{pmatrix}\] 
	where $\Theta_{2} = [A_4,A_3] \Lambda_{21} - \Lambda_{43} [A_2,A_1]$.
	By after some manipulations we obtain
	\begin{align*}
	\Theta_{2} = \lambda_1 [A_4,A_3,A_2] + \lambda_2 [A_3,A_4,A_1] + \lambda_3 [A_2,A_1,A_4] + \lambda_4 [A_1,A_2,A_3].
	\end{align*} Put
	\[ [Y_2,Y_4][Y_3,Y_1] = \begin{pmatrix}
	0 & \Theta_3 \\
	0 & 0
	\end{pmatrix} \ \ \mbox{and} \ \ [Y_3,Y_2][Y_4,Y_1] = \begin{pmatrix}
	0 & \Theta_4 \\
	0 & 0
	\end{pmatrix}.
	\]  By Jacobi's identity we obtain that 
	 $\Theta_2 + \Theta_3 + \Theta_4 = 0$.
\end{proof}

Let $f(y_1,y_2,y_3,w_1,\ldots, w_m) \in F\langle Y\cup Z \rangle$ where $w_1 \ldots w_m \in Y$ and let $$\mathbf{Ja}_{(y_1,y_2,y_3)} f$$ denote the polynomial
\[ 	 f(y_1,y_2,y_3,w_1,\ldots, w_m) + f(y_2,y_3,y_1,w_1,\ldots, w_m)  + f(y_3,y_1,y_2,w_1,\ldots, w_m).
\]

\begin{corollary}\label{coro with w}
Let $w_1,w_2 \in Y$. The  polynomial 
$$ \mathbf{Ja}_{(y_1,y_2,y_3)} [y_1,y_2,w_1] [y_3,w_2] $$
is $*$-polynomial identity for $\mathcal{A}$.  
\end{corollary}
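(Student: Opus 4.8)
The plan is to derive this corollary directly from Proposition \ref{identidade Y} by a suitable substitution, rather than re-doing the matrix computation. Recall that Proposition \ref{identidade Y} asserts that the polynomial
\[
g(y_1,y_2,y_3,y_4) \;=\; [y_4,y_3][y_2,y_1] + [y_3,y_2][y_4,y_1] + [y_2,y_4][y_3,y_1]
\]
vanishes on $\mathcal{A}^+$. The key observation is that the corollary's polynomial is obtained from a polynomial of this shape by the substitutions that replace $[y_i,y_j]$ by $[y_i,y_j,w]$, i.e.\ by right-multiplying the relevant commutators by a symmetric variable. So first I would record the elementary fact (already implicit in item ii) of the opening remarks of Section 3, $[A_1,A_2](A_3-A_3^*) = [A_1,A_2,A_3]$) that for $Y\in\mathcal{A}^+$ the map ``multiply on the right by $Y$'' does \emph{not} in general send a commutator to a commutator, but I would instead work at the level of the matrix blocks.

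The concrete approach: let $Y_1,Y_2,Y_3,W_1,W_2\in\mathcal{A}^+$ with $Y_i=\begin{pmatrix}A_i & C_i\\ 0 & A_i^*\end{pmatrix}$ and similarly for the $W_j$. From the computation in the proof of Proposition \ref{identidade Y}, a product $[Y_a,Y_b]\,Y\,[Y_c,Y_d]$ of two commutators with a symmetric variable inserted is a matrix of the form $\begin{pmatrix}0 & \Theta\\ 0 & 0\end{pmatrix}$, and by Lemma \ref{lema identidade geral de Y} (applied with $Q_1=[Y_a,Y_b]$, $Q_2 = [Y_c,Y_d]$, both of the required block shape (\ref{produto de dois comutadores A})) inserting a symmetric $W$ only scales by $\lambda=2^{-1}\mathrm{tr}$ of the appropriate block of $W$. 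Hence $[Y_1,Y_2,W_1][Y_3,W_2]$ has upper-right block equal to $[A_1,A_2,?]\cdots$ where the bracket structure in the $2\times2$ blocks is governed purely by the commutators $[A_i,A_j]$ together with the scalars coming from $C_i$ and the traces of the $W_j$-blocks; crucially the \emph{same} Jacobi cancellation that makes $\Theta_2+\Theta_3+\Theta_4=0$ in Proposition \ref{identidade Y} applies here. So I would expand $\mathbf{Ja}_{(y_1,y_2,y_3)}[y_1,y_2,w_1][y_3,w_2]$ after substitution, collect the upper-right block as a linear combination of triple commutators $[A_{\sigma(1)},A_{\sigma(2)},A_{\sigma(3)}]$ with coefficients that are products of the $\lambda$-scalars, and observe that the three cyclic terms assemble into $\mathbf{Ja}$ applied to a Jacobi triple, hence vanish.

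Alternatively — and this is the cleaner route I would actually present — I would note that modulo $Id(\mathcal{A},*)$ one has, by Proposition \ref{propo identidades A} and item ii) of the opening remarks of Section 3, the congruence $[y_1,y_2,w_1][y_3,w_2]\equiv [y_1,y_2]w_1[y_3,w_2]$ is \emph{not} quite what I want; rather, I would use that on $\mathcal{A}^+$ the block structure makes $[y_1,y_2,w_1]$ behave (in its role as the left factor of a product of two ``commutator-type'' matrices) exactly like $\mu\,[y_1,y_2]$ for a scalar $\mu$ depending on $w_1$, by Lemma \ref{lema identidade geral de Y}. Then $\mathbf{Ja}_{(y_1,y_2,y_3)}[y_1,y_2,w_1][y_3,w_2]$ evaluates, block by block, to $\mathbf{Ja}_{(y_1,y_2,y_3)}$ of something proportional to $[y_1,y_2][y_3, \cdot]$, which is the content of (the proof of) Proposition \ref{identidade Y} up to the bookkeeping of which commutator gets the extra variable. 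I expect the main obstacle to be purely notational: keeping straight the four scalars $\lambda_i = 2^{-1}\mathrm{tr}(C_i)$ attached to the $y_i$'s versus the two scalars attached to $w_1,w_2$, and verifying that after cyclic summation every monomial in these scalars multiplies a genuine Jacobi triple $[A_a,A_b,A_c]+[A_b,A_c,A_a]+[A_c,A_a,A_b]$. Once that matching is checked, the vanishing is immediate from the Jacobi identity, exactly as in the proof of Proposition \ref{identidade Y}.
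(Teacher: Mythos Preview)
Your proposal circles the correct idea --- use Lemma \ref{lema identidade geral de Y} to reduce to Proposition \ref{identidade Y} --- but you talk yourself out of the one step that makes it work. You write that the expansion $[y_1,y_2,w_1][y_3,w_2] = [y_1,y_2]\,w_1\,[y_3,w_2] - w_1\,[y_1,y_2][y_3,w_2]$ ``is not quite what I want''; in fact it is exactly what the paper uses. Summing cyclically over $(y_1,y_2,y_3)$, the second piece becomes $-W_1\cdot \mathbf{Ja}_{(y_1,y_2,y_3)}([Y_1,Y_2][Y_3,W_2])$, which vanishes by Proposition \ref{identidade Y} (with $y_4$ replaced by $w_2$). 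For the first piece, Lemma \ref{lema identidade geral de Y} applies with $Q_1=[Y_i,Y_j]$ and $Q_2=[Y_k,W_2]$ and gives a \emph{single} scalar $\alpha$, the same for every cyclic term since $\alpha$ depends only on $W_1$; so that piece is $\alpha\cdot \mathbf{Ja}_{(y_1,y_2,y_3)}([Y_1,Y_2][Y_3,W_2])=0$ as well. That is the entire proof.

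By contrast, your ``cleaner route'' asserts that $[Y_1,Y_2,W_1]$, as a left factor, behaves like $\mu[Y_1,Y_2]$ \emph{by Lemma \ref{lema identidade geral de Y}}. That lemma only controls a symmetric element sandwiched \emph{between} two $Q_i$; it says nothing about $W_1$ on the far left, so this step is unjustified as written (it happens to be true, but you would need a separate computation for the left action of $W_1$ on a matrix supported on $\mathbf{e}_{14}$). Your first, matrix-level plan with the scalars $\lambda_i$ would also eventually work but is unnecessary: no new Jacobi bookkeeping is needed beyond what is already packaged in Proposition \ref{identidade Y}.
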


\begin{proof} For instance $[y_i,y_j,w_1] = [y_i,y_j]w_1 - w_1 [y_i,y_j]$. Thus, we have to
\begin{align*}
&\mathbf{Ja}_{(y_1,y_2,y_3)} ([y_1,y_2,w_1] [y_3,w_2]) =\\ 
&= \mathbf{Ja}_{(y_1,y_2,y_3)} ([y_1,y_2] w_1 [y_3,w_2]) - w_1\mathbf{Ja}_{(y_1,y_2,y_3)} ([y_1,y_2] [y_3,w_2]).
\end{align*} Let $Y_1,Y_2,Y_3,W_1,W_2$, be elements of $\mathcal{A}^+$. By Lemma \ref{lema identidade geral de Y} there exists $\alpha \in F$ such that
\[ [Y_i,Y_j] W_1 [Y_k,W_2] = \alpha [Y_i,Y_j] [Y_k,W_2] \]
for all $i,j,k \in \{ 1,2,3 \}$. Therefore
\[ \mathbf{Ja}_{(y_1,y_2,y_3)} ([Y_1,Y_2] W_1 [Y_3,W_2]) = \alpha \mathbf{Ja}_{(y_1,y_2,y_3)} ([Y_1,Y_2] [Y_3,W_2]).\] By Proposition \ref{identidade Y} we obtain that $\mathbf{Ja}_{(y_1,y_2,y_3)} ([Y_1,Y_2] [Y_3,W_2]) = 0$. Thus, it has been demonstrated. 
\end{proof}

\begin{defi}
	Let  $I$ denote  the $T(*)$-ideal of $F\left\langle Y \cup Z \right\rangle $
	generated by the following polynomials:
	\begin{itemize}
		\item[1)] $v_1 v_2 v_3$, such that $v_1, v_2, v_3$ be commutators.
		\item[2)] $v_1 u v_2 - v_2^* u^* v_1^*$, $v_1 v_2 u - u^* v_2^* v_1^*$, such that $v_1, v_2$ be commutators and $u \in Y \cup Z$.
		\item[3)] $ \mathbf{Ja}_{(y_1,y_2,y_3)} ([y_1,y_2] [y_3,y_4])$, $ \mathbf{Ja}_{(y_1,y_2,y_3)} ([y_1,y_2,y_4] [y_3,y_5])$.
	\end{itemize}
\end{defi}

	By Proposition \ref{propo identidades A}, Proposition \ref{identidade Y} and Corollary \ref{coro with w} we have to
	\begin{equation} 
\label{primeira inclusao}
	 I \subseteq Id(\mathcal{A},*).	\end{equation}

\begin{lemma}
\label{lema identidade 2z}
For every $v_1$,$v_2$ commutators of $F\left\langle Y \cup Z \right\rangle $ 
	there exists $\alpha  \in F$ such that \[ 2 z_1 v_1 v_2 + [v_1,z_1] v_2 + \alpha [v_2,z_1] v_1 \in I.\]
	\end{lemma}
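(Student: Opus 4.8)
The plan is to reduce everything to a computation inside $F\langle Y\cup Z\rangle / I$, using the generators of $I$ listed in the Definition. First I would observe that, since the statement claims only the existence of some $\alpha$, it suffices to produce \emph{one} valid $\alpha$ for each pair $v_1,v_2$; and by multilinearity and the fact (item 2) that any product $v_1 u v_2$ or $v_1 v_2 u$ is symmetric modulo $I$ up to the reversal $\mapsto \pm v_2^* u^* v_1^*$, the value of $\alpha$ should depend only on the ``types'' of $v_1$ and $v_2$ (whether each is symmetric or skew as an element of $F\langle Y\cup Z\rangle$, which is governed by the parity of the number of $z$'s occurring in it). Recall from item i) of Lemma~\ref{lema das identidades de A} combined with Proposition~\ref{propo identidades A} that $v_1 v_2 \equiv \varepsilon\, v_2^* v_1^*$ where $\varepsilon = (-1)^{(\#z\text{ in }v_1)(\#z\text{ in }v_2)}$ type sign data; in fact since commutators are built from the Jacobi structure and $[a_1,a_2]^*=[a_1,a_2]$ only in the $UT_2$ case, here the relevant observation is that a commutator $v$ satisfies $v^* = (-1)^{k} v$ modulo $I$ where $k$ is its $z$-degree, because $(z_i)^* = -z_i$ and $*$ reverses products. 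Hence $\alpha$ will turn out to be $1$ in all cases, or a simple sign.

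The key computation is to expand $[v_1,z_1] = v_1 z_1 - z_1 v_1$ and $[v_2,z_1] = v_2 z_1 - z_1 v_2$, so that
\[
2 z_1 v_1 v_2 + [v_1,z_1] v_2 + \alpha [v_2,z_1] v_1 = 2 z_1 v_1 v_2 + v_1 z_1 v_2 - z_1 v_1 v_2 + \alpha v_2 z_1 v_1 - \alpha z_1 v_2 v_1,
\]
which simplifies to $z_1 v_1 v_2 + v_1 z_1 v_2 + \alpha v_2 z_1 v_1 - \alpha z_1 v_2 v_1$. Now I would apply item 2 of the generating set of $I$ to rewrite $v_1 z_1 v_2 \equiv (z_1)^* v_2^* v_1^* = -z_1 v_2^* v_1^*$ modulo $I$ (using $z_1^*=-z_1$), and similarly $v_2 z_1 v_1 \equiv -z_1 v_1^* v_2^*$. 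Then using $v_i^* \equiv (-1)^{\deg_z v_i} v_i \pmod I$ — which itself follows from item 2 applied with trivial outer commutators, or more directly from the structure of $\mathcal{A}$ via Lemma~\ref{lema das identidades de A} — the four remaining terms become scalar multiples of $z_1 v_1 v_2$ and $z_1 v_2 v_1$, and another application of item 2 (or Proposition~\ref{propo identidades A}) identifies $z_1 v_2 v_1 \equiv \pm z_1 v_1 v_2$ modulo $I$. Collecting coefficients then forces a unique choice of $\alpha$ (with sign determined by the parities of $\deg_z v_1$ and $\deg_z v_2$) that makes the whole expression vanish in $F\langle Y\cup Z\rangle / I$.

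The main obstacle I anticipate is bookkeeping the signs correctly: one must be careful that the ``reversal'' relations in item 2 are stated for $v_1 u v_2$ with $u$ a single variable and $v_1,v_2$ commutators, so every step where I reverse a product of the form (commutator)(variable)(commutator) or (commutator)(commutator)(variable) must be matched to exactly that template, and the passage from $v_i^*$ to $\pm v_i$ modulo $I$ needs to be justified (I would prove the auxiliary fact $v \equiv (-1)^{\deg_z v} v^* \pmod I$ for any commutator $v$ as a preliminary remark, by induction on the length of $v$ using $[a,b]^* = [b^*,a^*] = -[a^*,b^*]$ and item 2). Once that sign lemma is in place, the verification is a short linear-algebra exercise in the quotient, and the resulting $\alpha \in \{1,-1\}$ depending on type; in the ``generic'' symmetric case one gets $\alpha = 1$.
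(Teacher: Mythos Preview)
Your overall strategy coincides with the paper's: expand the brackets, use the reversal relations in item~2 of the definition of $I$, and exploit that each commutator is an eigenvector of~$*$. Two concrete points, however, need correction.

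First, the relation in item~2 reads $v_1\,u\,v_2 - v_2^{*} u^{*} v_1^{*}\in I$, so the correct rewriting is
\[
v_1 z_1 v_2 \equiv v_2^{*}\,z_1^{*}\,v_1^{*} \;=\; -\,v_2^{*}\,z_1\,v_1^{*}\pmod I,
\]
not $z_1^{*} v_2^{*} v_1^{*}$ as you wrote. With your version the four surviving terms do \emph{not} all reduce to scalar multiples of $z_1 v_1 v_2$ and $z_1 v_2 v_1$; in fact the computation then fails to vanish for any choice of~$\alpha$. With the correct version you immediately get $v_1 z_1 v_2 + \alpha\,v_2 z_1 v_1 \equiv 0$ once $\alpha$ is chosen so that $v_2^{*}v_1^{*}=\alpha\,v_2 v_1$, and the remaining piece $z_1 v_1 v_2 - \alpha z_1 v_2 v_1$ dies by $v_1 v_2 \equiv v_2^{*}v_1^{*}=\alpha\,v_2 v_1$ (obtained from item~2 by substituting a symmetric variable by~$1$). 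This is exactly the paper's argument.

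Second, the eigenvalue formula you propose, $v^{*}=(-1)^{\deg_z v}\,v$, is wrong: already $[y_1,y_2]^{*}=-[y_1,y_2]$ while $\deg_z=0$. The correct sign also involves the length of the commutator (for a left-normed commutator of length $n$ one has $v^{*}=(-1)^{\,n-1+\deg_z v} v$). More importantly, this is an \emph{exact} identity in $F\langle Y\cup Z\rangle$, not merely a congruence modulo~$I$, and the paper simply uses the qualitative fact $v^{*}=\pm v$ without ever needing the explicit sign: one \emph{defines} $\alpha$ by $v_2^{*}v_1^{*}=\alpha\,v_2 v_1$ and proceeds. Dropping the attempt at an explicit formula both simplifies the write-up and removes the risk of sign errors.
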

	\begin{proof} 
	It is easy to verify that given a commutator $v$, either $v^* = v$ or $v^* = - v$. Thus, there exists	 $\alpha \in F$ such that
	\begin{itemize}\label{comutadores comutam}
		\item [1)] $v_2^* v_1^* = \alpha v_2 v_1$
		\item [2)] $v_1 v_2 - \alpha v_2 v_1 + I  =   I$
		\item [3)] $v_1 z_1 v_2 + \alpha v_2 z_1 v_1 + I =  I$.
	\end{itemize}
Therefore, $z_1 v_1 v_2 + v_1 z_1 v_2 + \alpha [v_2,z_1] v_1 + I = I$. The proof is complete.
	\end{proof}

\begin{lemma}\label{lema z comuta}
	Let 
 $f$ a polynomial of $F \left\langle Y \cup Z \right\rangle$ and lets $v_1,v_2,v_3$ commutators of $F \left\langle Y \cup Z \right\rangle$, then $v_1 f v_2 v_3 \in I$.
\end{lemma}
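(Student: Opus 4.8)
The plan is to reduce the expression $v_1 f v_2 v_3$ to something that is manifestly in $I$ by exploiting item (1) of the definition of $I$ (the vanishing of a product of three commutators) together with the ``straightening'' relations in item (2). First I would observe that by the discussion preceding Lemma~\ref{lema base de Udos} and the remarks on $Y$-proper polynomials, it suffices to treat the case where $f$ is itself one of the generators of the space $B$, namely a monomial of the form $z_{i_1}^{r_1}\cdots z_{i_k}^{r_k} u_1\cdots u_t$ with each $u_j$ a commutator; and since everything in sight is linear in $f$, we may even assume $f = z^{a} u_1 \cdots u_t$ is a single such monomial, or more simply that $f$ is a product of skew-symmetric variables times a product of commutators. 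The point is that $v_1 f v_2 v_3$ then becomes $v_1 z_{i_1}\cdots z_{i_k} (u_1 \cdots u_t) v_2 v_3$.

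The key step is to move the stray variables $z_{i_1},\dots,z_{i_k}$ out from between the two blocks of commutators. If $t \ge 1$, then $v_1 z_{i_1}\cdots z_{i_k} u_1 \cdots u_t v_2 v_3$ already contains three commutators $u_1, v_2, v_3$ (after absorbing the $z$'s, via repeated application of Lemma~\ref{lema identidade 2z} or directly via item (2), into products where a commutator sits adjacent to another commutator), so it lies in $I$ by item (1). The genuinely delicate case is $t=0$, i.e. $v_1 z_{i_1}\cdots z_{i_k} v_2 v_3$: here I would peel off one $z$ at a time. Using item (2) in the form $v_1 u v_2 - v_2^* u^* v_1^* \in I$ with $u = z_{i_1}$, we can rewrite $v_1 z_{i_1} (z_{i_2}\cdots z_{i_k} v_2 v_3)$ — but $z_{i_2}\cdots z_{i_k} v_2 v_3$ is not a commutator, so instead I would first use Lemma~\ref{lema identidade 2z} to replace $z_{i_1} v_2 v_3$, modulo $I$, by a combination of $[v_2,z_{i_1}] v_3$ and $[v_3,z_{i_1}] v_2$, each of which is a product of two commutators; then $v_1 z_{i_2}\cdots z_{i_{k-1}}\cdot z_{i_k}\cdot\big([v_2,z_{i_1}] v_3\big)$ has the shape $v_1 (\text{$z$'s}) (\text{two commutators})$ with one fewer interior $z$ on the left of the commutator pair after we again apply Lemma~\ref{lema identidade 2z} to $z_{i_k}[v_2,z_{i_1}] v_3$. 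Iterating, all the interior $z$'s are absorbed and we are left with expressions $v_1 (\text{product of two commutators})$, at which point we are inside a product of three commutators and done by item (1). Throughout, the relations (2) and (3) and Lemma~\ref{lema identidade 2z} are applied only modulo $I$, which is legitimate since $I$ is a $T(*)$-ideal.

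I would then record the reductions carefully as a (finite) induction on $k$, the number of skew-symmetric variables sitting to the left of $v_2$, with the base case $k=0$ being immediate: $v_1 v_2 v_3 \in I$ by item (1) literally. The main obstacle I anticipate is bookkeeping rather than conceptual: one must be sure that each rewriting step either strictly decreases the count $k$ or lands directly in a product of three commutators, and one must handle the signs/scalars $\alpha$ produced by Lemma~\ref{lema identidade 2z} (which is harmless, since $I$ is a linear subspace) and the asymmetry between the two families of generators in item (2) (one with $u$ between $v_1$ and $v_2$, one with $u$ after $v_1 v_2$). A secondary subtlety is that $f$ may contain symmetric variables as well, not only skew ones; but symmetric variables inside a $Y$-proper polynomial either appear inside commutators (already accounted for) or, after reduction, can be dealt with by the same argument since items (2) and Lemma~\ref{lema identidade 2z} place no restriction forcing the sandwiched variable to be skew — indeed item (2) is stated for $u \in Y\cup Z$. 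Hence the argument goes through uniformly, and the conclusion $v_1 f v_2 v_3 \in I$ follows for arbitrary $f \in F\langle Y\cup Z\rangle$.
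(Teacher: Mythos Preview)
Your plan has a real gap in the handling of symmetric variables. The reduction ``it suffices to treat $f$ a generator of $B$'' is not justified: writing $f=\sum y_1^{s_1}\cdots y_n^{s_n} g_{(s)}$ with $g_{(s)}$ $Y$-proper gives
\[
v_1 f v_2 v_3 \;=\; \sum v_1\, y_1^{s_1}\cdots y_n^{s_n}\, g_{(s)}\, v_2 v_3,
\]
so the $y$-powers now sit between $v_1$ and the $Y$-proper part, and you are back to needing to move \emph{symmetric} variables past $v_1$. Your fallback claim that ``Lemma~\ref{lema identidade 2z} places no restriction forcing the sandwiched variable to be skew'' is false: that lemma uses $z_1^*=-z_1$ essentially. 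Indeed, from item~(2) of the definition of $I$ one gets $v_1 z_1 v_2 + \alpha\, v_2 z_1 v_1\in I$ but $v_1 y_1 v_2 - \alpha\, v_2 y_1 v_1\in I$ (opposite sign), and redoing the computation with $y_1$ in place of $z_1$ yields only $[v_1,y_1]v_2 - \alpha[v_2,y_1]v_1\in I$, which does \emph{not} let you eliminate $y_1 v_2 v_3$. So the induction you propose does not close for symmetric letters.

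The paper bypasses all of this with a one-line argument that never invokes Lemma~\ref{lema identidade 2z}, the $Y$-proper decomposition, or item~(2). For a single variable $w\in Y\cup Z$ write
\[
v_1 w v_2 v_3 \;=\; w\, v_1 v_2 v_3 \;+\; [v_1,w]\, v_2 v_3,
\]
and both summands lie in $I$ by item~(1) (the first because $I$ is an ideal and $v_1v_2v_3\in I$, the second because $[v_1,w]$ is a commutator). Then for arbitrary $f$ one simply writes $f=f^{+}+f^{-}$ and substitutes $f^{+}$ for a symmetric variable and $f^{-}$ for a skew one, using that $I$ is a $T(*)$-ideal. Equivalently, one can iterate the displayed identity on a monomial $w_1\cdots w_k$ and induct on $k$. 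Either way, only item~(1) and the ideal/$T(*)$-ideal property of $I$ are used.
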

\begin{proof}
Let  $w \in Y\cup Z$. Then, \[ v_1 w v_2 v_3 + I = w v_1 v_2 v_3 + [v_1,w] v_2 v_3 + I = I.\] Since $f = f^+ + f^-$ the proof is complete.
\end{proof}

\begin{proposition}\label{propo identidade geral de Y}
	Let $w_1,\ldots,w_{m+1}$ be symmetric variables. Then
	\[ \mathbf{Ja}_{(y_1,y_2,y_3)} ([y_1,y_2,w_1,\ldots,w_{m}] [y_3,w_{m+1}]) \in I \] 
 for all $m\geq 1$.
\end{proposition}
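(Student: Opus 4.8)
The plan is to prove the statement by induction on $m$, with the base case $m=1$ being exactly the content of Corollary \ref{coro with w}. For the inductive step, suppose the claim holds for $m-1$, i.e.
\[
\mathbf{Ja}_{(y_1,y_2,y_3)} \bigl([y_1,y_2,w_1,\ldots,w_{m-1}] [y_3,w_{m}]\bigr) \in I,
\]
and we must show the corresponding statement with one more symmetric variable $w_m$ inserted into the long commutator and $w_{m+1}$ in the short one. The natural device is to compare the two expressions by rewriting $[y_1,y_2,w_1,\ldots,w_m]$ as $[\,[y_1,y_2,w_1,\ldots,w_{m-1}],w_m\,] = [y_1,y_2,w_1,\ldots,w_{m-1}]\,w_m - w_m\,[y_1,y_2,w_1,\ldots,w_{m-1}]$, so that each half of $\mathbf{Ja}$ splits into two terms. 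Writing $u := [y_{\sigma(1)},y_{\sigma(2)},w_1,\ldots,w_{m-1}]$ (a commutator) for each cyclic permutation $\sigma$, the summand becomes
\[
u\, w_m\, [y_{\sigma(3)},w_{m+1}] - w_m\, u\, [y_{\sigma(3)},w_{m+1}].
\]

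The second family of terms, $w_m\, u\,[y_{\sigma(3)},w_{m+1}]$, is handled by pulling the leading $w_m$ out of the whole $\mathbf{Ja}$-sum: since $w_m$ does not depend on the cyclic index, $\sum_\sigma w_m\, u\,[y_{\sigma(3)},w_{m+1}] = w_m \cdot \mathbf{Ja}_{(y_1,y_2,y_3)}\bigl([y_1,y_2,w_1,\ldots,w_{m-1}][y_3,w_{m+1}]\bigr)$, which lies in $I$ by the induction hypothesis (with $w_m$ playing no role and $w_{m+1}$ in the role of the old last symmetric variable). For the first family, $u\, w_m\, [y_{\sigma(3)},w_{m+1}]$, I would use Lemma \ref{lema identidade geral de Y} applied inside the algebra $\mathcal A$: there, for symmetric $W_m\in\mathcal A^+$ one has $Q_1 W_m Q_2 = \alpha\, Q_1 Q_2$ for a single scalar $\alpha$ depending only on $W_m$ (the coefficient is the "$b$" of the proof of that lemma, independent of $Q_1,Q_2$), where $Q_1 = u$ evaluated and $Q_2 = [y_{\sigma(3)},w_{m+1}]$ evaluated both have the block shape required by that lemma since they are (products involving) commutators in $\mathcal A$. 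Hence on evaluations in $\mathcal A$,
\[
\sum_\sigma U\, W_m\, [Y_{\sigma(3)},W_{m+1}] = \alpha \sum_\sigma U\, [Y_{\sigma(3)},W_{m+1}] = \alpha\,\mathbf{Ja}_{(y_1,y_2,y_3)}\bigl([y_1,y_2,w_1,\ldots,w_{m-1}][y_3,w_{m+1}]\bigr)\big|_{\text{eval}} = 0
\]
by the induction hypothesis together with \eqref{primeira inclusao}. The subtlety is that $I\subseteq Id(\mathcal A,*)$ only gives vanishing on $\mathcal A$, not membership in $I$; so to conclude genuine membership in $I$ I would instead argue at the level of $I$ directly, using Lemma \ref{lema z comuta} and Lemma \ref{lema identidade 2z} to normalize: the key structural fact is that modulo $I$ one may freely move a symmetric variable sitting between two commutators across (as in the proof of Corollary \ref{coro with w}, where the passage $[Y_i,Y_j]W_1[Y_k,W_2] = \alpha [Y_i,Y_j][Y_k,W_2]$ was used), so $u\,w_m\,[y_{\sigma(3)},w_{m+1}] \equiv \alpha\, u\,[y_{\sigma(3)},w_{m+1}] \pmod{I}$ with the same scalar $\alpha$ for every $\sigma$, and summing gives the induction hypothesis up to the scalar $\alpha$.

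The main obstacle I anticipate is justifying that the scalar $\alpha$ produced by Lemma \ref{lema identidade geral de Y} really is independent of the cyclic index $\sigma$ and can be promoted from an identity-on-$\mathcal A$ statement to a congruence modulo $I$. The clean way around this is to not invoke the evaluated form at all: instead I would prove, as a preliminary, a purely formal congruence $v_1\, w\, v_2 \equiv \tfrac12 (\operatorname{"trace part of } w) \cdot v_1 v_2 \pmod I$ — or more honestly, observe that the generators of type $2)$ in the definition of $I$ already force $v_1 w v_2 - v_2^* w^* v_1^* \in I$, and combine this with the reordering lemmas (\ref{lema identidade 2z}, \ref{lema z comuta}) to collapse $v_1 w v_2$ modulo $I$ into a multiple of $v_1 v_2$ by a scalar that is the same in each of the three cyclic terms because it is extracted the same way. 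Once that formal reduction is in place, both families of terms reduce, modulo $I$, to scalar multiples of $\mathbf{Ja}_{(y_1,y_2,y_3)}([y_1,y_2,w_1,\ldots,w_{m-1}][y_3,w_{m+1}])$, which is in $I$ by induction, completing the proof.
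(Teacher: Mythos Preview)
There is a genuine gap. Your inductive step hinges on a congruence of the form
\[
v_1\, w_m\, v_2 \equiv \alpha\, v_1 v_2 \pmod I
\]
with a fixed scalar $\alpha$, but no such congruence holds in $F\langle Y\cup Z\rangle/I$. Lemma~\ref{lema identidade geral de Y} only says that, after substituting a specific $W_m\in\mathcal A^+$, one has $Q_1 W_m Q_2 = \alpha(W_m)\, Q_1 Q_2$; the scalar depends on the matrix $W_m$ (it is the $(2,2)$-entry of its upper-left block) and cannot be attached to the formal variable $w_m$. Indeed, if $v_1 w_m v_2 - \alpha\, v_1 v_2\in I\subseteq Id(\mathcal A,*)$ for a fixed $\alpha$, evaluating at two different $W_m$ with different $(2,2)$-entries gives a contradiction. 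Your attempted repair via the type~2) generators and Lemmas~\ref{lema identidade 2z}--\ref{lema z comuta} does not help: the relation $v_1 w v_2 - v_2^* w^* v_1^*\in I$ still leaves $w$ present, and those lemmas concern skew variables $z$ and absorption into triple-commutator products, not elimination of a sandwiched symmetric variable.

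The paper avoids this issue altogether by pushing $w_m$ into the short commutator rather than trying to delete it. Using $a[b,c]=[b,ac]-[b,a]c$ one rewrites, modulo $I$,
\[
\mathbf{Ja}_{(y_1,y_2,y_3)}\bigl([y_1,y_2,w_1,\ldots,w_{m-1}]\,w_m\,[y_3,w_{m+1}]\bigr)
\equiv
\mathbf{Ja}_{(y_1,y_2,y_3)}\bigl([y_1,y_2,w_1,\ldots,w_{m-1}]\,[y_3,\,w_m w_{m+1}]\bigr),
\]
since the correction term $[y_3,w_m]\,w_{m+1}$ produces, after summing cyclically, an element of $I$ by the induction hypothesis at level $m-1$. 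One then iterates once more to reach $[y_3,\,w_{m-1}w_m w_{m+1}]$, splits $w_{m-1}w_m=\tfrac12(w_{m-1}w_m+w_m w_{m-1})+\tfrac12[w_{m-1},w_m]$, and uses that $I$ is a $T(*)$-ideal: the symmetric part $w_{m-1}w_m+w_m w_{m-1}\in F\langle Y\cup Z\rangle^+$ may be substituted for a single symmetric variable in the (strengthened) induction hypothesis, while the commutator part yields a product of three commutators, hence lies in $I$. That $T(*)$-closure trick is the mechanism your argument is missing.
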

\begin{proof}
This proof is done by induction.	The equality $a[b,c] = [b,a c] - [b,a] c$ is satisfy by elements of 
$F \left\langle Y \cup Z \right\rangle$. Then, we have to 
\begin{align*}
	& \mathbf{Ja}_{(y_1,y_2,y_3)} ([y_1,y_2] [y_3,w_1w_2]) + I =\\ 
	& =\mathbf{Ja}_{(y_1,y_2,y_3)} ([y_1,y_2] w_1[y_3,w_2])  + I \\ 
	& = w_1 \mathbf{Ja}_{(y_1,y_2,y_3)} ([y_1,y_2] [y_3,w_2]) + \mathbf{Ja}_{(y_1,y_2,y_3)} ([y_1,y_2,w_1] [y_3,w_2]) + I = I.
\end{align*}
Suppose that  $m\geq 2$ and that
\[ \mathbf{Ja}_{(y_1,y_2,y_3)} ([y_1,y_2,w_1,\ldots,w_{k}] [y_3,w_{m+1}]) + I = I\]
\begin{equation}\label{induction} 
\mathbf{Ja}_{(y_1,y_2,y_3)} ([y_1,y_2,w_1,\ldots,w_{k-1}] [y_3,w_k w_{m+1}]) + I = I\end{equation}
 for all $1 \leq k <m$.
It follows that
 \begin{align}\label{j1}
 	\mathbf{Ja}_{(y_1,y_2,y_3)} (w_{k+1} [y_1,y_2,w_1,\ldots,w_{k}]  [y_3,w_{m+1}]) = \nonumber \\
 	= w_{k+1} \mathbf{Ja}_{(y_1,y_2,y_3)} ([y_1,y_2,w_1,\ldots,w_{k}] [y_3,w_{m+1}]) \in I
 \end{align}
 and
 \begin{align}\label{j2}
\mathbf{Ja}_{(y_1,y_2,y_3)} ( [y_1,y_2,w_1,\ldots,w_{k}]  [y_3,w_{k+1}] w_{m+1}) = \nonumber \\
= (\mathbf{Ja}_{(y_1,y_2,y_3)} [y_1,y_2,w_1,\ldots,w_{k}] [y_3,w_{k+1}]) w_{m+1} \in I.
\end{align}

By apply  (\ref{j1}) and (\ref{j2}) we have to
\begin{align}\label{equations}
	&\mathbf{Ja}_{(y_1,y_2,y_3)} ([y_1,y_2,w_1,\ldots,w_{m}] [y_3,w_{m+1}]) + I = \nonumber \\ &= \mathbf{Ja}_{(y_1,y_2,y_3)} ([y_1,y_2,w_1,\ldots,w_{m-1}]  w_{m} [y_3,w_{m+1}]) + I \nonumber \\
	&= \mathbf{Ja}_{(y_1,y_2,y_3)} ([y_1,y_2,w_1,\ldots,w_{m-1}]  [y_3, w_{m} w_{m+1}]) + I  \nonumber \\
	&= \mathbf{Ja}_{(y_1,y_2,y_3)} ([y_1,y_2,w_1,\ldots,w_{m-2}] w_{m-1} [y_3, w_{m} w_{m+1}]) \nonumber\\
		&+ w_{m-1} \mathbf{Ja}_{(y_1,y_2,y_3)} ([y_1,y_2,w_1,\ldots,w_{m-2}] [y_3, w_{m} w_{m+1}]) + I \nonumber\\
		&=  \mathbf{Ja}_{(y_1,y_2,y_3)} ([y_1,y_2,w_1,\ldots,w_{m-2}] [y_3, w_{m-1} w_{m} w_{m+1}]) + I
\end{align}
The element $w_{m-1} w_m$ can be written as $$2^{-1} (w_{m-1} w_m + w_{m-1} w_m) + 2^{-1} [w_{m-1}, w_m] .$$
Note that   $w_{m-1} w_m + w_{m-1} w_m \in F \left\langle Y \cup Z \right\rangle^+$ and that any product of three commutators is in $I$. Then, (\ref{equations}) is equal to
\begin{align*}
	=  2^{-1} 	\mathbf{Ja}_{(y_1,y_2,y_3)} ([y_1,y_2,w_1,\ldots,w_{m-2}] [y_3, [w_{m-1}, w_{m}]]) w_{m+1} + I
\end{align*}
So by induction  hypothesis, equation (\ref{induction}), this element is equal to $I$, also
\[ \mathbf{Ja}_{(y_1,y_2,y_3)} ([y_1,y_2,w_1,\ldots,w_{m-1}]  [y_3, w_{m} w_{m+1}]) \in I. \] 
\end{proof}

\begin{proposition}\label{propo dois comutadores}
Let  $\delta,\gamma,\epsilon$ be integers such that $\delta \geq 0$, $\gamma,\epsilon \geq 2$. Let $u_j \in Y \cup Z$ where $i \in \mathbb{N}$ and $$f = z_{r_1} \ldots z_{r_{\delta}} [u_{s_1},\ldots,u_{s_{\gamma}}] [u_{t_1},\ldots,u_{t_{\epsilon}}].$$  Then, $f + I$ is linear combination of elements the type
	\[ u_{i_1} \ldots u_{i_m} [u_{j_1},\ldots,u_{j_n}][u_{k_1},u_{k_2}] + I,\] where
	\begin{itemize}		
		\item[1)] $m \geq 0$, $u_{k_2} < u_{k_1}$, $u_{j_1} > u_{j_2} \leq \ldots \leq u_{j_n}$ and  $u_{i_1} \leq \ldots \leq u_{i_m}$.
		\item[2)] $u_{k_2} \leq u_{i_1}, u_{j_2}$.
		\item[3)] Every $u_{i_1} \ldots u_{i_m} [u_{j_1},\ldots,u_{j_n}][u_{k_1},u_{k_2}]$ has the same multidegree as $f$.
	\end{itemize}
\end{proposition}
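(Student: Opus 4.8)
The plan is to reduce an arbitrary product of two commutators (preceded by skew variables) to the normal form described in the statement in three stages, always working modulo $I$.

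\textbf{Stage 1: move the $z$'s inside and shorten the second commutator.} First I would absorb the prefix $z_{r_1}\cdots z_{r_\delta}$ into the commutator part. Using the identity $z w - w z = [w,z]$ together with Lemma~\ref{lema z comuta} (which says $v_1 f v_2 v_3 \in I$ for any commutators $v_1,v_2,v_3$), every time a $z$ is commuted past the first commutator it produces either a term where the $z$ has moved left (to be handled recursively) or a product of three commutators, which lies in $I$. Similarly, repeated use of the identity $a[b,c]=[b,ac]-[b,a]c$ lets me pull the inner factors of the \emph{second} commutator $[u_{t_1},\ldots,u_{t_\epsilon}]$ out to the left one at a time; each such step either lengthens the first commutator or creates a product of three commutators (in $I$) or pushes a variable to the far left. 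Iterating, I reduce to the case where the second commutator has length exactly $2$, i.e.\ to expressions $\,p\cdot[u_{j_1},\ldots,u_{j_n}]\,[u_{k_1},u_{k_2}]\,$ with $p$ a monomial in $Y\cup Z$, plus error terms already in $I$; the key point, as in Lemma~\ref{lema z comuta}, is that once there are two commutators on the right, any variable inserted between or before them can be freely commuted out at the cost of elements of $I$.

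\textbf{Stage 2: order the entries of the commutators.} Next I would apply Lemma~\ref{lema comutador ordenado} to the first commutator $[u_{j_1},\ldots,u_{j_n}]$: modulo products of two commutators it becomes a linear combination of ordered commutators $[u_{j_1},\ldots,u_{j_n}]$ with $u_{j_1}>u_{j_2}\le\cdots\le u_{j_n}$. But here the ``error'' is a product of two commutators sitting next to \emph{another} commutator $[u_{k_1},u_{k_2}]$, hence a product of three commutators, which is in $I$ by item~1) of the definition of $I$. For the length-$2$ commutator $[u_{k_1},u_{k_2}]$ I simply use antisymmetry to arrange $u_{k_2}<u_{k_1}$ (if the two variables were equal the commutator vanishes). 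Then I reorder the leading monomial $p$ into nondecreasing order $u_{i_1}\le\cdots\le u_{i_m}$: swapping two adjacent variables $u u'$ introduces $[u',u]$, and $[u',u]$ times a commutator times a commutator is again a triple product of commutators, in $I$. This gives condition~1).

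\textbf{Stage 3: push the smallest variable to the $[u_{k_1},u_{k_2}]$ slot.} The remaining work is condition~2): we need the smaller entry $u_{k_2}$ of the last commutator to be $\le u_{i_1}$ and $\le u_{j_2}$. This is where the real input is needed, and it is the main obstacle. The idea is that Proposition~\ref{identidade Y} (and its generalization Proposition~\ref{propo identidade geral de Y}, giving $\mathbf{Ja}_{(y_1,y_2,y_3)}([y_1,y_2,w_1,\ldots,w_m][y_3,w_{m+1}])\in I$) lets us ``rotate'' which variable plays the role of the isolated argument $y_3$ in the short commutator. Concretely, if $u_{k_2}$ is larger than the smallest variable appearing in $u_{i_1}\cdots u_{i_m}$ or among $u_{j_2},\ldots,u_{j_n}$, one uses the Jacobi-type relations to rewrite the expression as a combination of expressions in which a smaller variable has been swapped into the $u_{k_2}$ position, modulo $I$; each application strictly decreases (in the chosen order) the variable in that distinguished slot, so the process terminates. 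One must also invoke Lemma~\ref{lema identidade 2z}, which trades $z_1 v_1 v_2$ for $[v_1,z_1]v_2$ plus a multiple of $[v_2,z_1]v_1$ modulo $I$, to handle the case where the variable to be moved is skew-symmetric and currently sits in the monomial prefix. The bookkeeping — checking that every rewriting step either lands in $I$ or strictly reduces a well-chosen monomial order while preserving multidegree (condition~3) is automatic since every relation in $I$ is multihomogeneous) — is the delicate part; the termination argument is what makes the induction go through.
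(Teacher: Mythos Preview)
Your Stages~1 and~2 are essentially workable (if somewhat more laborious than necessary), but Stage~3 contains a genuine gap. You propose to achieve condition~2) --- getting the globally smallest variable into the $u_{k_2}$ slot --- by invoking the Jacobi-type relation of Proposition~\ref{propo identidade geral de Y}. This does not do what you claim. First, that proposition is stated and proved only for \emph{symmetric} variables $w_1,\ldots,w_{m+1}\in Y$, so it is unavailable in the mixed $Y\cup Z$ setting of the present proposition. Second, and more fundamentally, the operator $\mathbf{Ja}_{(y_1,y_2,y_3)}$ rotates the three positions $j_1,\,j_2,\,k_1$; the distinguished slot $w_{m+1}=u_{k_2}$ is fixed throughout the relation. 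So the Jacobi rotation cannot move a smaller variable \emph{into} the $u_{k_2}$ position. (In the paper, Proposition~\ref{propo identidade geral de Y} is used only later, in Corollary~\ref{coro so variaveis Y}, to enforce the extra condition $k_1\le j_1$, which is a statement about the $k_1$ slot, not $k_2$.)

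The paper avoids this obstacle by a reversal of your order of operations: it identifies the global minimum $u_{k_2}$ \emph{first}, before any expansion. Using the relation $v_1v_2 \equiv \alpha\, v_2v_1 \pmod I$ (item~2) in the proof of Lemma~\ref{lema identidade 2z}) one may assume the minimum lies in the second commutator; then Lemma~\ref{lema comutador ordenado} places it at position $t_2$. Only \emph{after} this does one expand $[u_{t_1},\ldots,u_{t_\epsilon}]=\sum m_1[u_{k_1},u_{k_2}]m_2$ as a sum of monomial sandwiches of the fixed length-$2$ commutator $[u_{k_1},u_{k_2}]$, and absorb the monomials $m_1,m_2$ via commutator commutation and Lemma~\ref{lema comutador ordenado}. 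With this order, condition~2) holds by construction and no rotation argument is needed. For $\delta>0$, Lemmas~\ref{lema identidade 2z} and~\ref{lema z comuta} are used exactly once, to insert the smallest $z$ into the commutator product, reducing to the $\delta=0$ case.
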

\begin{proof}
Let $u_{k_2} = \min \{z_{r_1}, \ldots, z_{r_{\delta}}, u_{s_1},\ldots,u_{s_{\gamma}}, u_{t_1},\ldots,u_{t_{\epsilon}}\}$. We have two cases:
	
	\
	
	\noindent \textbf{Case} $\delta = 0$. In this case $f = [u_{s_1},\ldots,u_{s_{\gamma}}] [u_{t_1},\ldots,u_{t_{\epsilon}}]$. By (\ref{comutadores comutam}) we can suppose that $u_{k_2}$ it is in the second commutator of $f$. In addition, by Lemma \ref{lema comutador ordenado}  we can suppose that $k_2 = t_2  $ and $u_{t_1} > u_{t_2} \leq u_{t_3} \leq \ldots \leq u_{t_{\epsilon}}$.
Write $$[u_{t_1},\ldots,u_{t_{\epsilon}}] = \sum_{m_1,m_2}  m_1 [u_{k_1},u_{k_2}] m_2,$$ where, $m_1,m_2$ monomials such that  $m_1 [u_{k_1},u_{k_2}] m_2$ has the same multidegree as $[u_{t_1},\ldots,u_{t_{\epsilon}}]$.
	
	Now, for every $m_1$, we have to that $ [u_{s_1},\ldots,u_{s_{\gamma}}] m_1$  is linear combination of elements the type $u_{i_1} \ldots u_{i_l} [u_{j_1},\ldots,u_{j_n}]$  of the same multidegree as $ [u_{s_1},\ldots,u_{s_{\gamma}}] m_1$. Then,  $f + I$ is linear combination of  elements of the type
	\[ u_{i_1} \ldots u_{i_l} [u_{j_1},\ldots,u_{j_n}][u_{k_1},u_{k_2}] m_2 + I. \]
	By (\ref{comutadores comutam}) again,  $f + I$ is linear combination of elements of the type
\[ u_{i_1} \ldots u_{i_m} [u_{j_1},\ldots,u_{j_n}][u_{k_1},u_{k_2}] + I, \] where each $u_{i_1} \ldots u_{i_m} [u_{j_1},\ldots,u_{j_n}][u_{k_1},u_{k_2}]$ has the same multidegree as $f$. Finally, by Lemma \ref{lema comutador ordenado} we can suppose that  $u_{j_1} > u_{j_2} \leq \ldots \leq u_{j_n}$ and  $u_{i_1} \leq \ldots \leq u_{i_m}$.
	
	\
	
	\noindent \textbf{Case} $\delta >0$. In this case $u_{k_2} = z_r$ for some $r$. By Lemma \ref{lema z comuta} and  Lemma \ref{lema identidade 2z} we can suppose, without loss of generality, that $u_{k_2}$ it is in $[u_{s_1},\ldots,u_{s_{\gamma}}] [u_{t_1},\ldots,u_{t_{\epsilon}}]$. It is enough to apply the preceding case to $ [u_{s_1},\ldots,u_{s_{\gamma}}] [u_{t_1},\ldots,u_{t_{\epsilon}}]$ and, if necessary, reordering the variables that are outside of the commutators. 
	
	So, we conclude the proof.
	\end{proof}

\begin{corollary}\label{coro so variaveis Y}
	Let $v_1, v_2 $ be commutators involving the symmetric variables only, then $v_1 v_2 + I$ is  linear combination of  elements of the type 
	\[ y_{i_1} \ldots y_{i_r} [y_{j_1},\ldots,y_{j_s}][y_{k_1},y_{k_2}] + I\] where
	\begin{itemize}
		\item[1)] $r\geq 0$, ${k_2} < {k_1}$, ${j_1} > {j_2} \leq \ldots \leq {j_s}$ and  ${i_1} \leq \ldots \leq {i_r}$.		
		\item[2)] ${k_2} \leq {i_1}, {j_2}$.  
		\item[3)] $k_1 \leq j_1$.
		\item[4)] Every $y_{i_1} \ldots y_{i_r} [y_{j_1},\ldots,y_{j_s}][y_{k_1},y_{k_2}]$ has the same multidegree as $v_1 v_2$.
	\end{itemize}
\end{corollary}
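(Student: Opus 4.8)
The plan is to start from Proposition~\ref{propo dois comutadores}. Applied with $\delta = 0$ to the product $v_1 v_2$ of two commutators in the symmetric variables, that proposition already writes $v_1 v_2 + I$ as a linear combination of elements $y_{i_1}\cdots y_{i_r}[y_{j_1},\ldots,y_{j_s}][y_{k_1},y_{k_2}] + I$ satisfying conditions~1), 2) and~4). Since conditions~1) and~2) force $k_2$ to be $\leq$ every index among $i_1,\ldots,i_r,j_1,\ldots,j_s,k_1$, only condition~3), that is $k_1 \leq j_1$, remains to be arranged, and this may be done separately for each such term.

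So fix one term $g = y_{i_1}\cdots y_{i_r}[y_{j_1},\ldots,y_{j_s}][y_{k_1},y_{k_2}]$ satisfying 1), 2), 4) with $k_1 > j_1$ (otherwise $g$ is already of the required form), and I would argue by induction on the integer $k_1$. Put $\vec{w} = (y_{j_3},\ldots,y_{j_s})$. The polynomial $\mathbf{Ja}_{(y_1,y_2,y_3)}\big([y_1,y_2,\vec{w}][y_3,y_4]\big)$ belongs to $I$: for $s = 2$ it is one of the generators listed in item~3) of the definition of $I$, and for $s \geq 3$ it is provided by Proposition~\ref{propo identidade geral de Y}. Substituting $y_1 \mapsto y_{j_1}$, $y_2 \mapsto y_{j_2}$, $y_3 \mapsto y_{k_1}$, $y_4 \mapsto y_{k_2}$, multiplying on the left by $y_{i_1}\cdots y_{i_r}$, and using that $I$ is a $T(*)$-ideal, I would obtain
\[ g \ \equiv\ -\,y_{i_1}\cdots y_{i_r}\,[y_{k_1},y_{j_1},\vec{w}]\,[y_{j_2},y_{k_2}] \ -\ y_{i_1}\cdots y_{i_r}\,[y_{j_2},y_{k_1},\vec{w}]\,[y_{j_1},y_{k_2}] \pmod{I}. \]
In each term on the right the long commutator is not canonical, so I would rewrite it by Lemma~\ref{lema comutador ordenado} as a linear combination of canonical commutators on the same multiset of indices plus a linear combination of products of two commutators; once multiplied by the remaining length-two commutator, the latter part becomes a sum of products of three commutators and so lies in $I$. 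For the first term this leaves a combination of $y_{i_1}\cdots y_{i_r}[y_{l_1},\ldots,y_{l_s}][y_{j_2},y_{k_2}]$ with $\{l_1,\ldots,l_s\} = \{k_1,j_1,j_3,\ldots,j_s\}$ as multisets and $l_1 > l_2 \leq l_3 \leq \cdots$; since $k_2 \leq j_2 < j_1 < k_1$ and $k_2 \leq j_2 \leq j_3 \leq \cdots$, one gets $k_2 \leq l_2$, $k_2 \leq i_1$ and $j_2 < l_1$, so each such term already satisfies 1)--4) (we may assume $k_2 < j_2$, since the term is zero otherwise). For the second term this leaves a combination of $y_{i_1}\cdots y_{i_r}[y_{l_1},\ldots,y_{l_s}][y_{j_1},y_{k_2}]$ with $\{l_1,\ldots,l_s\} = \{j_2,k_1,j_3,\ldots,j_s\}$ and $l_1 > l_2 \leq l_3 \leq \cdots$; the same checks give 1), 2), 4), and the top index of its length-two commutator is now $j_1$. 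If $l_1 \geq j_1$ such a term is of the required form; if $l_1 < j_1$ it has exactly the shape of $g$ but with the strictly smaller value $j_1 < k_1$ in the top slot of the length-two commutator, so the induction hypothesis applies. This would complete the induction, hence the proof.

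The routine ingredients are the repeated application of Lemma~\ref{lema comutador ordenado} and of the fact that any product of three commutators lies in $I$, together with the bookkeeping that conditions 1), 2), 4) persist after each rewriting. The one point needing care is the choice of induction parameter and the verification that it genuinely decreases: after using the Jacobi relation from $I$ and re-ordering the long commutator, the contribution of $[y_{k_1},y_{j_1},\vec{w}][y_{j_2},y_{k_2}]$ is always already good, because its length-two commutator is $[y_{j_2},y_{k_2}]$ and $j_2$ lies strictly below every possible leading index of a canonical commutator on $\{k_1,j_1,j_3,\ldots,j_s\}$; while the contribution of $[y_{j_2},y_{k_1},\vec{w}][y_{j_1},y_{k_2}]$ is either already good or carries the strictly smaller top index $j_1 < k_1$, which is what makes the recursion terminate.
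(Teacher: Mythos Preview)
Your proof is correct and follows essentially the same approach as the paper: apply Proposition~\ref{propo dois comutadores} to secure conditions 1), 2), 4), and then use the $\mathbf{Ja}$-relation (generator~3) of $I$ for $s=2$, Proposition~\ref{propo identidade geral de Y} for $s\ge 3$) to arrange condition~3). Your treatment is in fact more careful than the paper's sketch: where the paper simply notes that $g=[y_{k_1},y_{j_2},\ldots,y_{j_n}][y_{j_1},y_{k_2}]$ is already canonical and that one Jacobi step handles $h=[y_{k_1},y_{j_1},y_{j_3},\ldots,y_{j_n}][y_{j_2},y_{k_2}]$, you invoke Lemma~\ref{lema comutador ordenado} on both terms and set up an explicit induction on $k_1$---the induction for your ``second term'' is never actually triggered (since $[y_{k_1},y_{j_2},\vec{w}]$ is already canonical with leading index $k_1>j_1$), but it does no harm and makes the termination argument transparent.
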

\begin{proof} 
	By Proposition \ref{propo dois comutadores} we can suppose without loss of generality that 
	\[ v_1 v_2 = [y_{j_1},y_{j_2}, \ldots,y_{j_n}][y_{k_1},y_{k_2}] \] where ${k_2} < {k_1}$, ${j_1} > {j_2} \leq \ldots \leq {j_n}$ and
	${k_2} \leq  {j_2}$. Suppose that $k_1 > j_1$. By Proposition \ref{propo identidade geral de Y} we have to $v_1 v_2 + I = g - h + I$, where
	\begin{align*}
	g = [y_{k_1}, y_{j_2}, \ldots, y_{j_n}] [y_{j_1},y_{k_2}],\ h = [y_{k_1},y_{j_1},y_{j_3} \ldots, y_{j_n}] [y_{j_2},y_{k_2}].
	\end{align*}
	If $j_1 > j_3$, the Jacobi's identity 
	$ [y_{k_1},y_{j_1},y_{j_3}] = [y_{k_1},y_{j_3} ,y_{j_1}] - [y_{j_1},y_{j_3} ,y_{k_1}]$ it can be applied in $h$. This proof is complete.
\end{proof}

\begin{proposition}\label{propo li com z}
	Let $\Omega_z$  be the subset of  $F\left\langle Y \cup Z \right\rangle $ of multilinear polynomials of the type 
	\[ u_{i_1} \ldots u_{i_r} [u_{j_1},\ldots,u_{j_s}][u_k,z_1]\] 
	where, $r \geq 0$, $u_i\in Y \cup Z$,  $u_{i_1} < \ldots < u_{i_r}$ and $u_{j_1} > u_{j_2} < \ldots < u_{j_s}$. 
	
	Then, $ \Omega_z $ is   linearly  independent module $Id(\mathcal{A},*)$.
	\end{proposition}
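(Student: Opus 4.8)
The plan is to prove linear independence by exhibiting, for each monomial shape in $\Omega_z$, an explicit substitution of symmetric and skew-symmetric matrices of $\mathcal{A}$ on which that polynomial takes a nonzero value while all polynomials of strictly different shape vanish, or more efficiently by showing that a generic linear combination which lies in $Id(\mathcal{A},*)$ must have all coefficients zero. Since we work over a field of characteristic $0$, it suffices to treat multilinear polynomials, and we may assume a fixed total degree $d$; the variables occurring are $z_1$ together with some symmetric variables $y_i$ and possibly further skew variables $z_j$ among the $u$'s. First I would fix a linear combination $f=\sum \alpha_\sigma\, m_\sigma\in Id(\mathcal{A},*)$ where each $m_\sigma$ runs over the elements of $\Omega_z$ of the chosen multidegree, with the outside factors ordered $u_{i_1}<\cdots<u_{i_r}$, the left commutator in the normal form $u_{j_1}>u_{j_2}<\cdots<u_{j_s}$, and the last bracket $[u_k,z_1]$.

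The key computational input is the matrix description already assembled in the excerpt: a product of the form (outer variables)$\cdot[\,\cdot\,]\cdot[\,\cdot\,,z_1]$ evaluates, by Lemma \ref{lema das identidades de A}(i), to a matrix supported only on the $(1,2)$-entry of the lower-right $UT_2(F)$-block, i.e.\ a multiple of $\mathbf{e}_{34}$ of $UT_4(F)$; moreover by Lemma \ref{lema identidade geral de Y} every symmetric outside variable $Y$ inserted to the left of such a product merely contributes a scalar (its "$b$"-coordinate, the $\mathbf{e}_{22}$-entry of its $A$-block), while skew outside variables behave linearly too. So the evaluation of each $m_\sigma$ factors as (a product of scalars coming from the outside variables) times (the $\mathbf{e}_{34}$-coefficient of the two-commutator part). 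This reduces the problem to two essentially independent independence statements: first, that distinct choices of which variables sit outside versus inside the commutators can be separated — here I would substitute, for a variable meant to be "outside", a matrix that is scalar on the relevant block (so it only scales), and for a variable meant to be "inside" a strictly-upper-triangular-type matrix (a multiple of $\mathbf{e}_{12}$ in the appropriate block) that kills any monomial in which it is forced outside; second, that within a fixed partition into outside/inside variables, the two-commutator expressions $[u_{j_1},\ldots,u_{j_s}][u_k,z_1]$ in normal form are independent, which I would deduce from the known basis of $F\langle X\rangle/Id(UT_2(F))$ (Lemma \ref{lema base de Udos}) transported into the lower-right block via the map $A\mapsto\mathrm{diag}(A,A^*)$ as in the proof of Proposition \ref{propo base de B}, together with the explicit formula for $\Theta_2$ from the proof of Proposition \ref{identidade Y} that shows the $\mathbf{e}_{34}$-coefficient is exactly the relevant $UT_2$-commutator value applied to the lower-diagonal block.

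Concretely, the separation of the "inside" commutator structure goes like this. Given a target monomial $m_{\sigma_0}$, assign to its inside variables generic matrices of the block form $\begin{pmatrix}\alpha\mathbf{e}_{12}&\Theta\\0&\beta\mathbf{e}_{12}\end{pmatrix}$ (symmetric or skew as required), chosen so that in the full $4\times4$ product the contribution lands in $\mathbf{e}_{34}$ with the value of the corresponding $UT_2$-polynomial; assign to each outside variable a scalar matrix (which is automatically symmetric, so for skew outside variables use instead $\mathrm{diag}(B,-B^*)$ with $B$ scalar, giving a genuine skew element whose left-action is still scalar multiplication on the bottom block). Under such a substitution every $m_\sigma$ whose set of inside variables differs from that of $m_{\sigma_0}$ either vanishes (an outside-scalar variable cannot produce a nonzero commutator, and an inside variable appearing outside multiplies by a scalar that we may take to be $0$) or is already accounted for; among the surviving $m_\sigma$ with the same inside/outside split, the scalar prefactors from the outside variables are all equal, so we are left with a linear relation purely among the normal-form two-commutator words evaluated in $UT_2(F)$ via the diagonal embedding, which is trivial by Lemma \ref{lema base de Udos}. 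Running over all $\sigma_0$ forces every $\alpha_\sigma=0$.

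The main obstacle I anticipate is bookkeeping rather than conceptual: one must be careful that the normal-form conditions $u_{i_1}<\cdots<u_{i_r}$ and $u_{j_1}>u_{j_2}<\cdots<u_{j_s}$ are exactly strong enough that no two distinct elements of $\Omega_z$ collapse to the same monomial and that the substitutions used to isolate a given monomial do not accidentally make a competing monomial of the same multidegree nonzero. In particular, the position of $z_1$ — pinned as the last entry of the final bracket — and the possible presence of other skew variables require checking that the sign conventions ($v^*=\pm v$ for a commutator $v$, used via (\ref{comutadores comutam})) are consistent with the chosen skew substitutions; once that is verified, the independence follows from the $UT_2$-basis lemma exactly as in Proposition \ref{propo base de B}. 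I would also double-check the edge case $r=0$ and the case where some $u_{i_t}$ or $u_{j_t}$ equals a skew variable, since then the "scalar prefactor" argument must use a skew rather than symmetric scalar matrix; this is routine but needs to be stated.
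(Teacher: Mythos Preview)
Your approach is genuinely different from the paper's, and the paper's route is substantially simpler. The paper does \emph{not} try to separate the monomials by their outside/inside split. Instead it groups the terms of $f$ by which variable sits in the last bracket, writing $f=\sum_i h_i[z_i,z_1]+\sum_j g_j[y_j,z_1]$, and then isolates one summand at a time by a single substitution: take $Z_1=\mathrm{diag}(I_2,-I_2)$, take \emph{every} other variable block-diagonal ($Y_j=\mathrm{diag}(A_j,A_j^*)$, $Z_i=\mathrm{diag}(B_i,-B_i^*)$), except for the one designated variable $u_k$, which is given a purely off-diagonal value. Because $Z_1$ is central in the block-diagonal subalgebra, all brackets $[u,z_1]$ vanish except the designated one, so the whole of $f$ collapses to $h_k[Z_k,Z_1]$ (or $g_l[Y_l,Z_1]$); reading off the upper-left block then shows $h_k$ is an ordinary identity of $UT_2(F)$ in the basis of Lemma~\ref{lema base de Udos}, hence zero. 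No outside/inside bookkeeping is needed at all.

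Your proposed isolation by outside/inside partition has a gap you flag but do not close: when an outside variable of the target monomial is skew, you substitute $\mathrm{diag}(\lambda I_2,-\lambda I_2)$, and while this does act by the scalar $\lambda$ on the left of an $\mathbf{e}_{14}$-supported matrix (note: the product of two commutators lives in $\mathbf{e}_{14}$, the $(1,2)$-entry of the \emph{upper-right} block, not $\mathbf{e}_{34}$), it does \emph{not} commute with the off-block-diagonal matrices you assign to inside variables, so in a competing monomial $m_\sigma$ where this skew variable sits inside a commutator, the commutator need not vanish. Likewise, your choice of ``generic matrices of the block form $\begin{pmatrix}\alpha\mathbf{e}_{12}&\Theta\\0&\beta\mathbf{e}_{12}\end{pmatrix}$'' for inside variables is the shape of a \emph{commutator}, not of a symmetric or skew element, so it is unclear what concrete symmetric/skew substitutions you intend and why the two-commutator value then equals the desired $UT_2$-polynomial. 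These are not mere bookkeeping issues; without resolving them the separation argument does not go through. The paper's trick of pinning $Z_1$ as the skew ``center'' $\mathrm{diag}(I_2,-I_2)$ sidesteps all of this in one stroke.
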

	\begin{proof}
	Let $f(z_1,\ldots,z_m,y_1,\ldots,y_n)$ be a  linear combination of  polynomials in $\Omega_z$ such that $f\in Id(\mathcal{A},*)$. 
	Without loss of generality we can suppose that $f$  multilinear. Write 
	\[ f = \sum_{i=2}^m h_i [z_i,z_1] + \sum_{j=1}^n g_j [y_j,z_1] \] where
	\begin{itemize}
		\item[i)]	$h_i$ and $g_j$ are multilinear polynomials in variables $\{y_1,\ldots,y_n,z_2,\ldots,z_m\} \setminus z_i  $ and $\{y_1,\ldots,y_n,z_2,\ldots,z_m\} \setminus y_j  $ respectively.
		
		\item [ii)]	$h_i$ and $g_j$ are linear combination of polynomials	 of the type
		\begin{equation}\label{base de Udos} u_{i_1} \ldots u_{i_r} [u_{j_1},\ldots,u_{j_s}] \end{equation} 
		with $u_i\in Y \cup Z$, $u_{i_1} < \ldots < u_{i_r}$ and $u_{j_1} > u_{j_2} < \ldots < u_{j_s}$.
	\end{itemize}

	Fix $2 \leq k \leq n$. 
	Let $A_j,B_i \in UT_2(F)$ where $1 \leq j \leq n$, $1 \leq i \leq m$, $i \neq 1,k$. Let $D = \mathbf{e}_{11} - \mathbf{e}_{22}$	and $I_2$  the identity matrix in  $M_2(F)$. Consider the following matrices in 		$\mathcal{A}$ 
	\begin{align*}
	Z_1 = 	\begin{pmatrix} I_2 & 0 \\ 0 & - I_2 \end{pmatrix},\
	Z_k = \begin{pmatrix} 0 & D \\ 0 & 0 \end{pmatrix},\
	Y_j = \begin{pmatrix} A_j & 0 \\ 0 & A_j^* \end{pmatrix}\ \mbox{and} \
	Z_i = \begin{pmatrix} B_i & 0 \\ 0 & - B_i^* \end{pmatrix}. 
	\end{align*}
	Since $f\in Id(\mathcal{A},*)$ we have to $f (Z_1,\ldots,Z_m,Y_1,\ldots,Y_n,) = 0$. By simple inspection we obtain that $[Y_j,Z_1] = 0$ for $1 \leq j \leq m$, $[Z_i,Z_1] = 0$ for all $i \neq k$ and 
	\[ [Z_k,Z_1] = \begin{pmatrix} 0 & - 2D \\ 0 & 0 \end{pmatrix}. \] Thus, by substituting these matrices in $f$, we have to $h_k [Z_k,Z_1] = 0$ and  therefore 
	$$h_k (B_2,\ldots,\hat{B_k},\ldots,B_m,A_1,\ldots,A_n) = 0.$$
	Thereby, $h_k$ seen as element of $F\left\langle X\right\rangle $ is polynomial identity for $UT_2(F)$, by (\ref{base de Udos}) and Lemma \ref{lema base de Udos} we have to $h_k = 0$. Thus   
	\[ f =  \sum_{j=1}^n g_j [y_j,z_1]. \] 
	
	Analogously, given $1 \leq l \leq m$, can be show that $g_l=0$ by considering the following matrices in $\mathcal{A}$:	
	\begin{equation*}
	Y_l = \begin{pmatrix} 0 & I_2 \\ 0 & 0 \end{pmatrix},\
	Z_1 = \begin{pmatrix} I_2 & 0 \\ 0 & - I_2 \end{pmatrix},\
	Y_j = \begin{pmatrix} A_j & 0 \\ 0 & A_j^* \end{pmatrix} \ \mbox{and} \
	Z_i = \begin{pmatrix} B_i & 0 \\ 0 & - B_i^* \end{pmatrix}
	\end{equation*}	 
	where $A_j,B_i \in UT_2(F)$, $1 \leq j \leq n$, $j \neq l$, $1 \leq i \leq m$,  $i \neq 1$. This proof is complete.
	\end{proof}

\begin{lemma}\label{lema matrizes Y}
	Consider the following matrices in $\mathcal{A}^+$, $Y = \begin{pmatrix} 0 & I_2 \\ 0 & 0 \end{pmatrix}$ and 
	$Y_i = \begin{pmatrix} A_i & 0 \\ 0 & A_i^* \end{pmatrix}$ where $A_i \in UT_2(F)$, $1 \leq i \leq n$. Then,
	\begin{itemize}
		\item[1.] For all $n \geq 3$ we have to	
		\[ [Y,Y_3,\ldots,Y_n] [Y_2,Y_1] = \begin{pmatrix} 0 & -[A_2,A_1,A_3,\ldots,A_n] \\ 0 & 0 \end{pmatrix}. \]
		\item[2.] For all $n \geq 4$ we have to \[ [Y_3,\ldots,Y_n,Y] [Y_2,Y_1] = 0. \]
	\end{itemize}
	\end{lemma}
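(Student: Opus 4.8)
The plan is to reduce both identities to explicit $2\times 2$ block multiplications, using three elementary facts about $UT_2(F)$: every commutator of length $\ge 2$ of matrices in $UT_2(F)$ is a scalar multiple of $\mathbf{e}_{12}$; for $M\in UT_2(F)$ one has $M\mathbf{e}_{12}=M_{11}\mathbf{e}_{12}$ and $\mathbf{e}_{12}M=M_{22}\mathbf{e}_{12}$, so that $\mathbf{e}_{12}\,UT_2(F)\,\mathbf{e}_{12}=0$; and the reflection $*$ on $UT_2(F)$ fixes $\mathbf{e}_{12}$ while interchanging the two diagonal entries, whence $A^*-A=d_A(\mathbf{e}_{11}-\mathbf{e}_{22})$ with $d_A:=(A)_{22}-(A)_{11}$, and $[c\mathbf{e}_{12},A^*]=-[c\mathbf{e}_{12},A]$ for $c\in F$ and $A\in UT_2(F)$. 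I abbreviate $d_i:=d_{A_i}$.

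First I would record the shape of the commutators built only from the $Y_i$'s. Using that $[A^*,B^*]=-[A,B]$ (a consequence of item~i) of the elementary facts stated just before Lemma~\ref{lema das identidades de A}) and that $[c\mathbf{e}_{12},A^*]=-[c\mathbf{e}_{12},A]$, a routine induction on $k$ yields, for $k\ge 2$,
\[
[Y_{i_1},\dots,Y_{i_k}]=
\begin{pmatrix}
[A_{i_1},\dots,A_{i_k}] & 0\\
0 & (-1)^{k-1}[A_{i_1},\dots,A_{i_k}]
\end{pmatrix}.
\]
Next I would treat the commutators that involve $Y$ exactly once. Since $Y$ is strictly block upper triangular, every $[Y,Y_{j_1},\dots,Y_{j_r}]$ equals $\left(\begin{smallmatrix}0 & M_r\\ 0 & 0\end{smallmatrix}\right)$ for some $M_r\in UT_2(F)$, with $M_1=A_{j_1}^*-A_{j_1}$ and $M_{r+1}=M_rA_{j_{r+1}}^*-A_{j_{r+1}}M_r$. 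Passing to $(1,1)$-entries and using $(XY)_{11}=X_{11}Y_{11}$ and $(A^*)_{11}=(A)_{22}$, this recursion collapses to $(M_{r+1})_{11}=d_{j_{r+1}}(M_r)_{11}$, hence $(M_r)_{11}=d_{j_1}\cdots d_{j_r}$.

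For part~1, take $r=n-2$ and $(j_1,\dots,j_r)=(3,\dots,n)$, so $[Y,Y_3,\dots,Y_n]=\left(\begin{smallmatrix}0 & M\\ 0 & 0\end{smallmatrix}\right)$ with $M_{11}=d_3\cdots d_n$. Writing $[A_2,A_1]=\gamma\mathbf{e}_{12}$ and multiplying by the $k=2$ instance of the formula above,
\[
[Y,Y_3,\dots,Y_n]\,[Y_2,Y_1]=
\begin{pmatrix}0 & -M[A_2,A_1]\\ 0 & 0\end{pmatrix}
=
\begin{pmatrix}0 & -\gamma\,M_{11}\,\mathbf{e}_{12}\\ 0 & 0\end{pmatrix},
\]
since $M\mathbf{e}_{12}=M_{11}\mathbf{e}_{12}$. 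On the other hand, bracketing $[A_2,A_1]=\gamma\mathbf{e}_{12}$ successively with $A_3,\dots,A_n$ and using $[c\mathbf{e}_{12},A_i]=c\,d_i\,\mathbf{e}_{12}$ gives $[A_2,A_1,A_3,\dots,A_n]=\gamma\,d_3\cdots d_n\,\mathbf{e}_{12}=\gamma M_{11}\mathbf{e}_{12}$; thus the $(1,2)$-block above equals $-[A_2,A_1,A_3,\dots,A_n]$, which is the assertion of part~1.

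For part~2 one has $n\ge 4$, so $k:=n-2\ge 2$ and the first computation gives $[Y_3,\dots,Y_n]=\left(\begin{smallmatrix}C & 0\\ 0 & (-1)^{k-1}C\end{smallmatrix}\right)$ with $C=[A_3,\dots,A_n]\in F\mathbf{e}_{12}$. Bracketing with $Y$ annihilates the diagonal blocks, leaving $[Y_3,\dots,Y_n,Y]=\left(\begin{smallmatrix}0 & C'\\ 0 & 0\end{smallmatrix}\right)$ with $C'=(1-(-1)^{k-1})C\in F\mathbf{e}_{12}$; hence $[Y_3,\dots,Y_n,Y]\,[Y_2,Y_1]$ has $(1,2)$-block $-C'[A_2,A_1]\in F\mathbf{e}_{12}\cdot F\mathbf{e}_{12}=0$, so the whole product vanishes. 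The only step needing care is the bookkeeping inside the two inductions — verifying that the diagonal blocks of $[Y_{i_1},\dots,Y_{i_k}]$ stay equal up to the sign $(-1)^{k-1}$, and that the $(1,2)$-block recursion multiplies the $(1,1)$-entry by exactly $d_{j_{r+1}}$ — but both reduce mechanically to the description of $*$ on $UT_2(F)$ and the absorption rules $M\mathbf{e}_{12}=M_{11}\mathbf{e}_{12}$, $\mathbf{e}_{12}M=M_{22}\mathbf{e}_{12}$; there is no conceptual obstacle.
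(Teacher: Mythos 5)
Your proof is correct, and it is organized differently from the paper's. The paper proves part 1 by induction directly on the product: it writes $[Y,Y_3,\ldots,Y_n][Y_2,Y_1]=P_1-P_2$ with $P_1=[Y,Y_3,\ldots,Y_{n-1}]Y_n[Y_2,Y_1]$ and $P_2=Y_n[Y,Y_3,\ldots,Y_{n-1}][Y_2,Y_1]$, invokes the induction hypothesis for the shorter product, and uses the commutation trick $\Theta A_n^*[A_2,A_1]=\Theta[A_2,A_1]A_n$ (valid because $[A_2,A_1]\in F\mathbf{e}_{12}$ and $(A_n^*)_{11}=(A_n)_{22}$) to reassemble $-[A_2,A_1,A_3,\ldots,A_n]$; it explicitly omits part 2. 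You instead compute closed forms for the two factors separately: $[Y_{i_1},\ldots,Y_{i_k}]$ is block diagonal with blocks $[A_{i_1},\ldots,A_{i_k}]$ and $(-1)^{k-1}[A_{i_1},\ldots,A_{i_k}]$, and $[Y,Y_{j_1},\ldots,Y_{j_r}]$ is strictly upper block triangular with $(1,1)$-entry of its corner block equal to $d_{j_1}\cdots d_{j_r}$; then you match both sides of part 1 as explicit multiples of $\mathbf{e}_{12}$, since $[A_2,A_1,A_3,\ldots,A_n]=\gamma d_3\cdots d_n\,\mathbf{e}_{12}$. All the auxiliary identities you use ($[A^*,B^*]=-[A,B]$, $[c\mathbf{e}_{12},A^*]=-[c\mathbf{e}_{12},A]$, the absorption rules $M\mathbf{e}_{12}=M_{11}\mathbf{e}_{12}$, $\mathbf{e}_{12}M=M_{22}\mathbf{e}_{12}$, and the recursion $M_{r+1}=M_rA_{j_{r+1}}^*-A_{j_{r+1}}M_r$) check out, and the induction bookkeeping is sound, including the base case $n=3$ which the paper skips. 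The extra payoff of your route is that part 2 becomes immediate: for $n\geq 4$ the corner block of $[Y_3,\ldots,Y_n,Y]$ lies in $F\mathbf{e}_{12}$ (here you correctly use $k=n-2\geq 2$, which is exactly where the hypothesis $n\geq 4$ enters), so multiplying by $[Y_2,Y_1]$ lands in $F\mathbf{e}_{12}\cdot F\mathbf{e}_{12}=0$; this supplies the argument for the case the paper leaves to the reader, and it also makes transparent why part 2 fails for $n=3$.
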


	\begin{proof} 
		We will to show the part 1 only.
	The proof  can be done by induction,  we will omit the case $n=3$. Suppose that
	$n > 3$ and write $[Y,Y_3,\ldots,Y_n] [Y_2,Y_1] = P_1 - P_2$, where 
	\[ P_1 = [Y,Y_3,\ldots,Y_{n-1}] Y_n [Y_2,Y_1] \ \ \mbox{and} \ \ P_2 = Y_n [Y,Y_3,\ldots,Y_{n-1}] [Y_2,Y_1]. \]
	Let $[Y,Y_3,\ldots,Y_{n-1}] = \begin{pmatrix} 0 & \Theta \\ 0 & 0 \end{pmatrix}$, then
	$$[Y,Y_3,\ldots,Y_{n-1}] [Y_2,Y_1] =\begin{pmatrix} 0 & - \Theta [A_2,A_1] \\ 0 & 0 \end{pmatrix}.$$ 
	By induction $\Theta [A_2,A_1] = [A_2,A_1,A_3,\ldots,A_{n-1}]$. Thus 
	\[ P_2 = \begin{pmatrix}
	0 & - A_n [A_2,A_1,A_3,\ldots,A_{n-1}] \\
	0 & 0
	\end{pmatrix} \]
	and
	\[ P_1 = 
	\begin{pmatrix} 0 & \Theta \\ 0 & 0 \end{pmatrix}
	\begin{pmatrix} A_n & 0 \\ 0 & A_n^* \end{pmatrix}
	\begin{pmatrix} [A_2,A_1] & 0 \\ 0 & - [A_2,A_1] \end{pmatrix}
	= \begin{pmatrix} 0 & \Psi \\ 0 & 0 \end{pmatrix} \] where $\Psi = - \Theta A_n^* [A_2,A_1] = - \Theta [A_2,A_1] A_n = - [A_2,A_1,A_3,\ldots,A_{n-1}] A_n$ as desired. 
	\end{proof}
	
\begin{lemma}\label{lema matrizes Y item 3}
	Let $W = \begin{pmatrix} \mathbf{e}_{22} & 0 \\ 0 & \mathbf{e}_{11} \end{pmatrix}$ and let $Y_1,\ldots Y_n$, $n \geq 4$, be arbitrary elements of $\mathcal{A}$. Then 
	\[ [Y_3,\ldots,Y_n,W] [Y_2,Y_1] = [Y_3,\ldots,Y_n] [Y_2,Y_1]. \]
	\end{lemma}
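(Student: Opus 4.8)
The plan is to expand the iterated commutator as
\[
[Y_3,\ldots,Y_n,W]=[[Y_3,\ldots,Y_n],W]=V W-W V,\qquad V:=[Y_3,\ldots,Y_n],
\]
and then analyze the two resulting products $V W[Y_2,Y_1]$ and $W V[Y_2,Y_1]$ separately. The key observation is that $W=\begin{pmatrix}\mathbf{e}_{22}&0\\0&\mathbf{e}_{11}\end{pmatrix}$ lies in $\mathcal{A}^{+}$ (indeed $\mathbf{e}_{22}^{*}=\mathbf{e}_{11}$ in $UT_2(F)$), and it is precisely the summand $Y''$ that occurs in the proof of Lemma \ref{lema identidade geral de Y}, with the scalar $b$ there equal to $1$.

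The first preparatory step is to record that $V=[Y_3,\ldots,Y_n]$, being a commutator of length $n-2\ge 2$ in elements of $\mathcal{A}$, is a matrix of type (\ref{produto de dois comutadores A}), and likewise $[Y_2,Y_1]$. For $[Y_2,Y_1]$ this is already stated in the proof of Lemma \ref{lema das identidades de A}. For $V$ one argues by induction on $n$: the case $n=4$ is that same remark, and for the inductive step one writes $V=[Y_3,\ldots,Y_{n-1}]Y_n-Y_n[Y_3,\ldots,Y_{n-1}]$, so it suffices to note that whenever $M$ is of type (\ref{produto de dois comutadores A}) and $P\in\mathcal{A}$, both $MP$ and $PM$ are again of type (\ref{produto de dois comutadores A}) — the $(1,1)$- and $(2,2)$-blocks of the products are $UT_2(F)$-multiples of $\mathbf{e}_{12}$, hence scalar multiples of $\mathbf{e}_{12}$, while the $(1,2)$-block stays in $UT_2(F)$.

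Now for the two terms. By Lemma \ref{lema identidade geral de Y} applied with $Q_1=V$, $Q_2=[Y_2,Y_1]$ and $Y=W$, there is a scalar depending only on $W$ with $V W[Y_2,Y_1]=\alpha\, V[Y_2,Y_1]$; and $\alpha=1$ for our $W$, since in the proof of that lemma $Q_1Y''Q_2=bQ_1Q_2$ and here $b=1$ (one may also check this in one line). For the other term, the computation carried out in the proof of Lemma \ref{lema das identidades de A}(i) gives $V[Y_2,Y_1]=\begin{pmatrix}0&\gamma\mathbf{e}_{12}\\0&0\end{pmatrix}$ for some $\gamma\in F$, and left-multiplying by $W$ and using $\mathbf{e}_{22}\mathbf{e}_{12}=0$ yields $W V[Y_2,Y_1]=0$. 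Combining,
\[
[Y_3,\ldots,Y_n,W][Y_2,Y_1]=V W[Y_2,Y_1]-W V[Y_2,Y_1]=V[Y_2,Y_1]=[Y_3,\ldots,Y_n][Y_2,Y_1],
\]
which is the assertion.

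The only step that takes any care is the inductive verification that iterated commutators of elements of $\mathcal{A}$ land in the block shape (\ref{produto de dois comutadores A}); this is a routine extension of what is already established in Lemma \ref{lema das identidades de A}, and once it is available the statement is immediate from Lemmas \ref{lema das identidades de A} and \ref{lema identidade geral de Y}. So I expect the genuine obstacle here to be essentially notational bookkeeping rather than any real difficulty.
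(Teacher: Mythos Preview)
Your proof is correct. Both arguments rest on the same observation that $V:=[Y_3,\ldots,Y_n]$ and $[Y_2,Y_1]$ have the block shape (\ref{produto de dois comutadores A}), but the organization differs. The paper computes $[V,W]$ explicitly in block form, subtracts $V$, and then checks directly that $\bigl([V,W]-V\bigr)[Y_2,Y_1]=0$ via the identity $(\Theta_1\mathbf{e}_{11}-\mathbf{e}_{22}\Theta_1-\Theta_1)\mathbf{e}_{12}=0$. You instead split $[V,W]=VW-WV$ and dispatch the two terms by reusing earlier lemmas: Lemma~\ref{lema identidade geral de Y} (with $b=1$) gives $VW[Y_2,Y_1]=V[Y_2,Y_1]$, and Lemma~\ref{lema das identidades de A}(i) together with $\mathbf{e}_{22}\mathbf{e}_{12}=0$ gives $WV[Y_2,Y_1]=0$. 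Your route is a bit more modular and avoids redoing the $2\times 2$ block calculation, while the paper's version is entirely self-contained; in substance the two are the same short block-matrix check.
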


	\begin{proof}
	There exist  $\alpha_i,\beta_i \in F$ and $\Theta_i \in UT_2(F)$ such that  
	\[ [Y_3,\ldots,Y_n] = \begin{pmatrix} \alpha_1 \mathbf{e}_{12} & \Theta_1 \\ 0 & \beta_1 \mathbf{e}_{12} \end{pmatrix}\ \ \mbox{and} \ \  
	[Y_2,Y_1] = \begin{pmatrix} \alpha_2 \mathbf{e}_{12} & \Theta_2 \\ 0 & \beta_2 \mathbf{e}_{12} \end{pmatrix}. \]
	Then,	 $[Y_3,\ldots,Y_n,W] = \begin{pmatrix} \alpha_1 \mathbf{e}_{12} & \Theta_1 \mathbf{e}_{11} - \mathbf{e}_{22} \Theta_1 \\ 0 & - \beta_1 \mathbf{e}_{12}\end{pmatrix}$ and 
	\[ [Y_3,\ldots,Y_n,W] - [Y_3,\ldots,Y_n] = 
	\begin{pmatrix} 0 & \Theta_1 \mathbf{e}_{11} - \mathbf{e}_{22} \Theta_1 - \Theta_1 \\ 0 & - 2 \beta_1 \mathbf{e}_{12}\end{pmatrix}. \]
	Since that $(\Theta_1 \mathbf{e}_{11} - \mathbf{e}_{22} \Theta_1 - \Theta_1) \mathbf{e}_{12} =- \mathbf{e}_{22} \Theta_1 \mathbf{e}_{12} = 0$ this proof is complete.
	\end{proof}

\begin{proposition}\label{propo li com y}
	Let $\Omega_y \subseteq F \left\langle Y \cup Z \right\rangle $ the subset of multilinear polynomials of the type. 
	\[ [y_{j_1},\ldots, y_{j_s}] [y_k,y_1] \] 
	where	 $j_1 > j_2 < \ldots < j_s$ and $j_1 > k$. Then, $ \Omega_y $ is linearly independent module $Id(\mathcal{A},*)$.
	\end{proposition}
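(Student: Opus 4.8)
The plan is to induct on the degree. As $\mathrm{char}\,F=0$ it is enough to treat a multilinear $f\in Id(\mathcal{A},*)$ that is a linear combination of elements of $\Omega_y$ of degree $n$, say $f=\sum \alpha_{(j,k)}\,[y_{j_1},\ldots,y_{j_s}][y_k,y_1]$, the sum running over the decompositions of $\{1,\ldots,n\}$ into a disjoint union $\{1\}\cup\{k\}\cup\{j_1,\ldots,j_s\}$ with $j_1>k$, $j_1>j_2<\cdots<j_s$ and $s=n-2$ (so $j_1\ge 3$). We must show all $\alpha_{(j,k)}=0$. The only ingredients will be the matrices $Y=\left(\begin{smallmatrix}0&I_2\\0&0\end{smallmatrix}\right)\in\mathcal{A}^{+}$ and $W=\left(\begin{smallmatrix}\mathbf{e}_{22}&0\\0&\mathbf{e}_{11}\end{smallmatrix}\right)\in\mathcal{A}^{+}$, the diagonal elements $Y_i=\left(\begin{smallmatrix}A_i&0\\0&A_i^{*}\end{smallmatrix}\right)\in\mathcal{A}^{+}$ (with $A_i\in UT_2(F)$ arbitrary), Lemmas \ref{lema matrizes Y} and \ref{lema matrizes Y item 3}, and the basis of $F\langle X\rangle/Id(UT_2(F))$ in Lemma \ref{lema base de Udos}. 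Note that in a monomial of $\Omega_y$ the variable of largest index occurring in the first commutator lies either in its first slot (exactly when $j_1=n$) or in its last slot, since $j_2<\cdots<j_s$; and since $k<j_1$ the variable $y_n$ always occurs in the first commutator.

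Step 1, killing the monomials with $j_1=n$. Evaluate $f$ at $y_n\mapsto Y$ and $y_i\mapsto Y_i$ for $i<n$. A monomial with $j_1=n$ then takes the value $\left(\begin{smallmatrix}0&-[A_k,A_1,A_{j_2},\ldots,A_{j_s}]\\0&0\end{smallmatrix}\right)$ by part 1 of Lemma \ref{lema matrizes Y}; a monomial with $j_1<n$ has $y_n$ as the last entry of its first commutator, and since such monomials exist only for $n\ge 5$ (hence $s\ge 3$) it takes the value $0$ by part 2 of Lemma \ref{lema matrizes Y}. Thus $f\in Id(\mathcal{A},*)$ forces $\sum_{j_1=n}\alpha_{(j,k)}\,[x_k,x_1,x_{j_2},\ldots,x_{j_s}]\in Id(UT_2(F))$. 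Here $k$ runs over $\{2,\ldots,n-1\}$ and $\{j_2,\ldots,j_s\}=\{2,\ldots,n-1\}\setminus\{k\}$ in increasing order, so each $[x_k,x_1,x_{j_2},\ldots,x_{j_s}]$ is one of the basis commutators of Lemma \ref{lema base de Udos}, and distinct $k$ give distinct ones; hence $\alpha_{(j,k)}=0$ whenever $j_1=n$. For $n=4$ every monomial has $j_1=4$, so this already settles the base of the induction.

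Step 2, degree reduction. Suppose $n\ge 5$ and let $f'$ be $f$ with all the $j_1=n$ terms removed; then $f'\in Id(\mathcal{A},*)$ and in each of its monomials $y_n$ is the last entry of the first commutator, which has $s\ge 3$ entries. Evaluate $f'$ at $y_n\mapsto W$, leaving $y_1,\ldots,y_{n-1}$ free. By Lemma \ref{lema matrizes Y item 3} (valid for arbitrary elements of $\mathcal{A}$) each term $[y_{j_1},\ldots,y_{j_{s-1}},W][y_k,y_1]$ becomes $[y_{j_1},\ldots,y_{j_{s-1}}][y_k,y_1]$, so $f'(a_1,\ldots,a_{n-1},W)=g(a_1,\ldots,a_{n-1})$ for all $a_i\in\mathcal{A}^{+}$, where $g=\sum\alpha_{(j,k)}[y_{j_1},\ldots,y_{j_{s-1}}][y_k,y_1]$ is a linear combination of elements of $\Omega_y$ of degree $n-1$ (the correspondence of monomials being a bijection). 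Since $W\in\mathcal{A}^{+}$ and $f'\in Id(\mathcal{A},*)$ we get $g\in Id(\mathcal{A},*)$, and the induction hypothesis gives $g=0$; that is, all remaining $\alpha_{(j,k)}$ vanish. This completes the induction, and hence the proof.

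I expect the genuine difficulty to be not a computation but keeping the substitutions in step with the combinatorics of $\Omega_y$: one has to verify that the maximal-index variable can only occupy the first or last slot of the first commutator of a monomial (so part 1 or part 2 of Lemma \ref{lema matrizes Y} always applies, with no "interior" case), that the configuration which would defeat part 2 of Lemma \ref{lema matrizes Y} — a first commutator of length $2$ ending in $Y$ — never arises among the $j_1<n$ monomials (these exist only for $n\ge 5$, where $s\ge 3$), that the passage from $f'$ to $g$ is a legitimate implication between $*$-identities (using that Lemma \ref{lema matrizes Y item 3} holds for all elements of $\mathcal{A}$, not merely the special ones), and that distinct $j_1=n$ monomials yield distinct basis commutators of $F\langle X\rangle/Id(UT_2(F))$. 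With those points secured, the evaluations $y_n\mapsto Y$ and $y_n\mapsto W$ carry the whole argument and the rest is bookkeeping.
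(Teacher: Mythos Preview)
Your proof is correct and follows essentially the same approach as the paper's: split off the $j_1=n$ terms by substituting $y_n\mapsto Y$ (using Lemma~\ref{lema matrizes Y}, parts 1 and 2, and Lemma~\ref{lema base de Udos}), then reduce degree by substituting $y_n\mapsto W$ (using Lemma~\ref{lema matrizes Y item 3}) and invoke induction. The combinatorial observations you spell out---that $y_n$ must sit in the first commutator, can only occupy its first or last slot, and that the $j_1<n$ case forces $s\ge 3$---are exactly the bookkeeping implicit in the paper's decomposition $f=f_4+\cdots+f_n$.
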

	\begin{proof}
	Let	 $n\geq 4$ and $f(y_1,\ldots,y_n)$ linear combination of polynomials in $\Omega_y$ such that $f\in Id(\mathcal{A},*)$.  Without loss of generality we can suppose that  $f$ multilinear. Write $f = f_4 + \ldots + f_n$ where  
	\[ f_t = \sum_{k=2}^{t-1} \alpha_k^{(t)} [y_t,y_{k_4},\ldots, y_{k_n}] [y_k,y_1] \ \ \ (4 \leq t \leq n) \ (\alpha_k^{(t)} \in F) \] 
	Observe that the indices $(k_4 < \ldots < k_n)$ are uniquely determined by $k$ and $t$.

	Consider the following elements in $\mathcal{A}^+$: 
	$$Y_n = \begin{pmatrix} 0 & I_2 \\ 0 & 0 \end{pmatrix}\  
	\mbox{and}\	Y_j = \begin{pmatrix} A_j & 0 \\ 0 & A_j^* \end{pmatrix}$$ 
	where  $j < n$.
	If $n = 4$ we have to 
	\[f = f_4 = \alpha_2^{(4)} [y_4,y_3] [y_2,y_1] + \alpha_3^{(4)} [y_4,y_2] [y_3,y_1]. \] 
	By Lemma \ref{lema matrizes Y} item 1, we obtain  $\alpha_2^{(4)}  [A_2,A_1,A_3] + \alpha_3^{(4)} [A_3,A_1,A_2] = 0$, therefore, 	 $\alpha_2^{(4)} = \alpha_3^{(4)} = 0$.
	Let $4 \leq t < n$, then by Lemma \ref{lema matrizes Y} item 2 we have to \[ [Y_t,Y_{k_4},\ldots, Y_{k_{n-1}},Y_n] [Y_k,Y_1] = 0\] for all $k<t$. Thus $f_t (Y_1, \ldots, Y_n) = 0$ for all $4 \leq t < n$ and
	\[ f (Y_1, \ldots, Y_n) = f_n (Y_1, \ldots, Y_n) = \sum_{k=2}^{n-1} \alpha_k^{(n)} [Y_n,Y_{k_4},\ldots, Y_{k_n}] [Y_k,Y_1] = 0.\]
	Now, by Lemma \ref{lema matrizes Y} item 1 again, we conclude that 
	\[ \sum_{k=2}^{n-1} \alpha_k^{(n)} [A_k,A_1,A_{k_4},\ldots,A_{k_n}] = 0 \]
for all $A_1,\ldots,A_{n-1} \in UT_2(F)$.	By Lemma \ref{lema base de Udos} we have to $\alpha_k^{(n)} = 0$ for all $2\leq k < n$. 
Therefore,	$f = f_4 + \ldots + f_{n-1}$. 

	Define $g = g_4 + \ldots + g_{n-1}$ where
	\[ g_t (y_1,\ldots,y_{n-1}) = \sum_{k=2}^{t-1} \alpha_k^{(t)} [y_t,y_{k_4},\ldots, y_{k_{n-1}}] [y_k,y_1]. \] where $4\leq t<n$.
	We claim that $g \in Id(\mathcal{A},*)$. In fact, let $ W = \begin{pmatrix} \mathbf{e}_{22} & 0 \\ 0 & \mathbf{e}_{11} \end{pmatrix}$ then by Lemma \ref{lema matrizes Y item 3}, for every $Y_1,\ldots,Y_n \in \mathcal{A}^+$ and every $4\leq t<n$ we have to $$g_t (Y_1, \ldots, Y_{n-1}) = f_t (Y_1, \ldots, Y_{n-1}, W).$$  

	Since $g \in Id(\mathcal{A},*)$ is a polynomial in $n-1$ variables, by induction, we obtain that $\alpha_k^{(t)} = 0$ for all  $2 \leq k < t < n$. This proof is complete.
	\end{proof}

\begin{theorem}\label{thm}
	Let $F$ be a field of characteristic char$F=0$. Then $Id(\mathcal{A},*) = I$. 
	\end{theorem}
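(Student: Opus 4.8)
The inclusion $I\subseteq Id(\mathcal{A},*)$ is (\ref{primeira inclusao}), so the task is the reverse inclusion $Id(\mathcal{A},*)\subseteq I$. Since $\mathrm{char}\,F=0$, the $T(*)$-ideal $Id(\mathcal{A},*)$ is generated by its multilinear $Y$-proper elements, and the same holds for $I$; hence it suffices to show that every multilinear $Y$-proper $*$-identity $f$ of $\mathcal{A}$ lies in $I$. I would follow the usual pattern: first put $f$ into a \emph{normal form} modulo $I$, i.e.\ write $f\equiv g\pmod I$ with $g$ a linear combination of an explicit family $\mathcal{C}$ of canonical polynomials; then show the image of $\mathcal{C}$ in $F\langle Y\cup Z\rangle/Id(\mathcal{A},*)$ is linearly independent. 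Because $I\subseteq Id(\mathcal{A},*)$, the relation $f\equiv g$ already holds modulo $Id(\mathcal{A},*)$, so $g\in Id(\mathcal{A},*)$; independence of $\mathcal{C}$ then forces $g=0$, whence $f\in I$ and $Id(\mathcal{A},*)=I$.

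For the normal form, the generators $v_1v_2v_3$ of $I$ kill all products of three commutators, so modulo $I$ a multilinear $Y$-proper polynomial is a combination of a $z$-monomial times at most two commutators. The ``at most one commutator'' part is brought, exactly as in the proof of Proposition~\ref{propo base de B} via Lemma~\ref{lema comutador ordenado}, to the ordered shape $z_{i_1}\cdots z_{i_k}[w_{j_1},\ldots,w_{j_s}]^{\theta}$ with $\theta\in\{0,1\}$ (at the cost of creating more two-commutator terms). The two-commutator part is reduced using Lemmas~\ref{lema identidade 2z} and~\ref{lema z comuta} to move skew variables past commutators, Proposition~\ref{propo dois comutadores} to reach the shape ``prefix times a long commutator times a two-term commutator carrying the least variable'', and Corollary~\ref{coro so variaveis Y} together with Proposition~\ref{propo identidade geral de Y} to impose the remaining ordering conditions and, in the case of only symmetric variables, to cancel the leading monomial against the Jacobi-type relations of $I$. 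The upshot is that $\mathcal{C}$ consists of the ordered $\le 1$-commutator monomials above, the elements of $\Omega_z$ (when a skew variable occurs), and the elements of $\Omega_y$ (when all variables are symmetric).

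To prove independence of $\mathcal{C}$ modulo $Id(\mathcal{A},*)$, write a vanishing combination as $g=g_1+g_2$, where $g_1$ is the $\le 1$-commutator part and $g_2$ the two-commutator part. Substituting elements of $\mathcal{B}^{+}$ and $\mathcal{B}^{-}$: a commutator of elements of $\mathcal{B}$ is block diagonal with strictly upper triangular $2\times 2$ blocks, so any product of two such commutators is $0$; hence $g_2$ vanishes on $\mathcal{B}$ and $g_1\in Id(\mathcal{B},*)$. By Proposition~\ref{propo base de B} every coefficient of $g_1$ is zero. Then $g_2\in Id(\mathcal{A},*)$; inside the (fixed) set of variables of $g_2$, either a skew variable occurs — so every term of $g_2$ lies in $\Omega_z$ and Proposition~\ref{propo li com z} forces its coefficients to vanish — or all variables are symmetric — so every term lies in $\Omega_y$ and Proposition~\ref{propo li com y} does the same. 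Thus $g=0$ and the theorem follows.

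The main obstacle is the normal-form step, and within it the two-commutator reduction when only symmetric variables appear: one must verify that the monomial prefixes produced by Proposition~\ref{propo dois comutadores} actually cancel modulo $I$ — using $v_1v_2\equiv v_2^{*}v_1^{*}$ together with the generators $\mathbf{Ja}_{(y_1,y_2,y_3)}([y_1,y_2][y_3,y_4])$ and $\mathbf{Ja}_{(y_1,y_2,y_3)}([y_1,y_2,y_4][y_3,y_5])$ — so that nothing outside the two explicitly independent families $\Omega_y$, $\Omega_z$ survives. This is in essence the combined content of Propositions~\ref{propo dois comutadores}, \ref{propo identidade geral de Y} and Corollary~\ref{coro so variaveis Y}, so the proof of the theorem is mostly a careful assembly of these reductions with the linear independence results in Propositions~\ref{propo base de B}, \ref{propo li com z} and~\ref{propo li com y}.
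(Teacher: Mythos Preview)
Your overall architecture coincides with the paper's proof: reduce to multilinear $Y$-proper $f$, split $f\equiv f_1+f_2 \pmod I$ into the $\leq 1$-commutator part $f_1$ and the two-commutator part $f_2$, kill $f_1$ via Proposition~\ref{propo base de B} using $\mathcal{B}$, and then treat $f_2$ separately according to whether a skew variable is present (Propositions~\ref{propo dois comutadores} and~\ref{propo li com z}) or not (Corollary~\ref{coro so variaveis Y} and Proposition~\ref{propo li com y}).

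There is one genuine gap in the purely symmetric case. You assert that the canonical family $\mathcal{C}$ there is $\Omega_y$, and that ``the monomial prefixes produced by Proposition~\ref{propo dois comutadores} actually cancel modulo $I$''. But Corollary~\ref{coro so variaveis Y} delivers elements of the form
\[
y_{i_1}\cdots y_{i_r}\,[y_{j_1},\ldots,y_{j_s}]\,[y_{k_1},y_{k_2}]
\]
with a nontrivial monomial prefix $y_{i_1}\cdots y_{i_r}$, whereas $\Omega_y$ consists only of the prefix-free elements $[y_{j_1},\ldots,y_{j_s}][y_k,y_1]$. None of the relations you invoke (the $\mathbf{Ja}$ generators, $v_1v_2\equiv v_2^{*}v_1^{*}$, or Proposition~\ref{propo identidade geral de Y}) eliminate these prefixes modulo $I$; they only rearrange what sits inside the two commutators. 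So your linear-independence step, as written, does not apply to the normal form you actually obtain.

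The paper does \emph{not} try to cancel the prefixes modulo $I$. Instead it keeps them and proves independence of the larger family $\{\,y_{i_1}\cdots y_{i_r}\,g_i\,\}$ by a substitution/induction argument: write $g=\sum_i y_{i_1}\cdots y_{i_r}g_i$ with each $g_i\in\mathrm{span}(\Omega_y)$, pick a summand with $r$ maximal, set $y_{i_1}=\cdots=y_{i_r}=1$ (which annihilates every other summand by maximality and multilinearity), conclude $g_i\in Id(\mathcal{A},*)$, and then apply Proposition~\ref{propo li com y}. Iterating gives $g=0$. If you insert this step in place of the claimed cancellation of prefixes, your proof becomes complete and matches the paper's.
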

	\begin{proof} 
We will to show that $Id(\mathcal{A},*) \subseteq I$.
	Let $f(z_1,\ldots,z_m,y_1,\ldots,y_n) \in Id(\mathcal{A},*)$ be a $Y$-proper multilinear polynomial. Since char$F=0$, its suffices to show that $f \in I$. Since $v_1 v_2 v_3 \in I$ we can write \[ f + I = f_1 + f_2 + I, \] where
	
	\noindent i)  $f_1$ is  multilinear polynomial and  linear combination of 
\begin{align}\label{equacao do teorema}
	z_1^{r_1} \ldots z_m^{r_m} [u_{j_1}, \ldots, u_{j_s}]^{\theta} 
	\end{align} where $0 \leq r_1,\ldots,r_n$ are integers, $\theta \in  \{0,1\}$ and $u_{j_1}, \ldots, u_{j_s} \in Y\cup Z$.
	
	\noindent ii)  $f_2$ is a $Y$-proper multilinear polynomial such that each of its terms has two commutators.
	
	In addition, by Lemma \ref{lema comutador ordenado}, we can suppose that $u_{j_1}> u_{j_2} < \ldots < u_{j_s}$ in (\ref{equacao do teorema}). Since $I \subseteq Id(\mathcal{A}, *)$, see (\ref{primeira inclusao}), we have to $f_1 + f_2 \in Id(\mathcal{A}, *) \subseteq Id(\mathcal{B}, *)$.
	By (\ref{lema B}) it follows that $f_2 \in Id(\mathcal{B}, *)$. Then $f_1 \in Id(\mathcal{B}, *)$. By Proposition \ref{propo base de B} we have to $f_1 = 0$. Thus $f_2 \in Id(\mathcal{A}, *)$. Now, we will to show $f_2 \in I$ by considering two cases.
	
	\
	
	\noindent \textbf{Case}  $m \geq 1$. By Proposition \ref{propo dois comutadores}, there exists a multilinear polynomial $$g (z_1,\ldots,z_m,y_1,\ldots,y_n)$$ such that $f_2 + I = g + I$ and $g$ is a linear combination of polynomials of the type 
	\[ u_{i_1} \ldots u_{i_r} [u_{j_1},\ldots,u_{j_s}][u_k,z_1]\] 
	where, $r \geq 0$, $u_i\in Y \cup Z$,  $u_{i_1} < \ldots < u_{i_r}$ and $u_{j_1} > u_{j_2} < \ldots < u_{j_s}$. Since $f_2 \in Id(\mathcal{A}, *)$, we have to $g \in Id(\mathcal{A}, *)$, by Proposition \ref{propo li com z} it follows that $g = 0$.
	
	\
	
	\noindent \textbf{Case}  $m = 0$. By Proposition \ref{coro so variaveis Y} there exists a multilinear polynomial $g (y_1,\ldots,y_n)$ such that $f_2 + I = g + I$ and $g$ is linear combination of polynomials of the type 
	\[ y_{i_1} \ldots y_{i_r} [y_{j_1},\ldots,y_{j_s}][y_k,y_1] + I\] where $r \geq 0$,  ${i_1} < \ldots < {i_r}$, ${j_1} > {j_2} < \ldots < {j_s}$ and  
	$k < j_1$.
	Write
	\[ g = \sum_i y_{i_1} \ldots y_{i_r} g_i \qquad (i = (i_1, \ldots, i_r)) \] where each $g_i$ is multilinear polynomial in variables 
	$$\{ y_1,\ldots,y_n \} \setminus \{ y_{i_1},\ldots,y_{i_r} \}.$$  
	Suppose that there exists  $i = (i_1, \ldots, i_r)$ such that $g_i \neq 0$. Choose $r$ as being the maximum integer with this property and put $y_{i_1} =  \ldots =  y_{i_r} = 1$ in $g$. Then $g_j = 0$ for all $j \neq i$. Since  $g \in Id(\mathcal{A}, *)$ we have to $g_i \in Id(\mathcal{A}, *)$. By  Proposition \ref{propo li com y} we have a contradiction. 
	
	Therefore, $g = 0$ and so $f_2 \in I$.
\end{proof}

\section{Concluding Remarks}
The Theorem \ref{thm} show that the $T(*)$-ideal of the $*$-polynomial identities of $\mathcal{A}$ is finitely generated because the $T(*)$-ideal generated by any commutator of $F \left\langle Y \cup Z \right\rangle$ is describe completely by 
\[ [z_1,z_2] \quad \mbox{or} \quad [y_1,y_2] \quad \mbox{or} \quad [y_1,z_1]. \]

Over the study of the $*$-polynomial identities of $UT_4(F)$, can be  shown that 
 $$[y_1,z_1] [y_2,z_2] [y_3,z_3]$$ is not the $*$-polynomial identity for $UT_4(F)$. So there exist more  $*$-polynomial identities to be discovered.

\end{document}